\numberwithin{equation}{section}
\def\PP#1{{\mathbb P}^{#1}}
\def\PPP{\mathbb P}
\def\GG{\mathbb G}
\def\G{\vec{G}}
\def\max{\mathrm{max}}
\def\sr{s\mathrm{rk}}
\def\a{\bigskip \par \noindent}
\def\move-in{\parshape=1.75true in 5true in}
\theoremstyle{plain}
\newtheorem{lemma}{Lemma}
\newtheorem{propos}{Proposition}
\newtheorem{corol}{Corollary}
\newtheorem{defi}{Definition}
\newtheorem{rem}{Remark}
\newtheorem{nota}{Notation}
\newtheorem{alg}{Algorithm}
\newtheorem{thm}{Theorem}
\date{}
\begin{document}
\title[]{Computing symmetric rank for symmetric tensors.}
\author{Alessandra Bernardi, Alessandro Gimigliano, Monica Id\`a}
\address{CIRM--FBK, Trento, Italy} 
\email{bernardi@fbk.eu}, 
\address{Dip. di Matematica  and C.I.R.A.M., Univ.
degli Studi di Bologna,  Italy}
\email{gimiglia@dm.unibo.it}
\address{Dip. di Matematica, Univ. degli Studi di
Bologna, Italy}
\email{ida@dm.unibo.it}

\begin{abstract}
We consider the problem of determining the symmetric tensor rank for
symmetric tensors with an algebraic geometry approach. We give
algorithms for computing the symmetric rank for $2\times \cdots
\times 2$ tensors and for tensors of small border rank. From a
geometric point of view, we describe the symmetric rank strata for
some secant varieties of Veronese varieties.
\end{abstract}

\maketitle

\section{Introduction}

In this paper we study problems related to how to represent
symmetric tensors, this is a kind of question which is relevant in
many applications as in Electrical Engineering (Antenna Array
Processing \cite{ACCF}, \cite{DM} and Telecommunications
\cite{Ch}, \cite{dLC}); in Statistics (cumulant tensors, see
\cite{McC}), or in Data Analysis ( Independent Component Analysis
\cite{Co1}, \cite{JS}). For other applications see also
\cite{Co2}, \cite{CR}, \cite{LMV}, \cite{SBG}.

Let $t$ be a symmetric tensor $t\in\ S^{d}V$, where $V$ is an
$(n+1)$-dimensional vector space; the minimum integer $r$ such that
$t$ can be written as the sum of $r$ elements of the type
$v^{\otimes d}\in S^{d}V$ is called the {\it symmetric rank } of $t$
(Definition \ref{srk}).

 In most applications it turns out that the knowledge of the symmetric rank
 is quite useful, e.g. the symmetric rank of a symmetric tensor extends the
Singular Value Decomposition (SVD) problem for symmetric matrices
(see \cite{GVL}).

It is quite immediate to see that we can associate a homogeneous
polynomial in $K[x_0,...,x_n]_d$ to any symmetric tensor $t\in\
S^{d}V$  (see \ref{Sylvester algorithm}). It is a very classical
algebraic problem (inspired by a number theory problem posed by
Waring in 1770, see \cite{W}), to determine which is the minimum
integer $r$ such that a generic form of degree $d$ in $n+1$
variables can be written as a sum of $r$ $d$-th powers of linear
forms. This problem, known as the Big Waring Problem, is equivalent
to determining the symmetric rank of $t$.

If we regard  $\PP {{n+d\choose d}-1}$ as $\PPP (K[x_{0}, \ldots ,
x_{n}]_{d})$, then the Veronese variety $X_{n,d}\subset \PP
{{n+d\choose d}-1}$ is the variety that parameterizes those
polynomials that can be written as $d$-th powers of a linear form
(see Remark \ref{nud}). When we view  $\PP {{n+d\choose d}-1}$ as
 $\PPP (S^{d}V)$, where $V$ is an
$(n+1)$-dimensional vector space, the Veronese variety parameterizes
projective classes of symmetric tensors of the type $v^{\otimes
d}\in S^{d}V$ (see Definition \ref{veronese}).

 The set that
parameterizes tensors in $\PPP (S^{d}V)$ of a given symmetric rank
is not a closed variety. If we consider $\sigma_{r}(X_{n,d})$, the
$r$-th secant variety of $X_{n,d}$ (see Definition \ref{secant}),
this is the smallest variety containing all tensors of symmetric
rank $r$, and this for all $r$ up to the ``typical rank", i.e. the
first $r$ for which  $\sigma_{r}(X_{n,d})=\PPP (S^{d}V)$.
 The smallest $r$ such that $T\in
\sigma_{r}(X_{n,d})$ is called the {\it symmetric border rank} of
$T$ (Definition \ref{sbr}). This shows that, from a geometric point
of view, it seems more natural to study the symmetric
border rank of tensors rather than the symmetric rank.

A geometric formulation of the Waring problem for forms asks which
is the symmetric border rank of a generic symmetric tensor of
$S^{d}V$. This problem was completely solved by J. Alexander and A.
Hirschowitz who computed the dimensions of $\sigma_{r}(X_{n,d})$ for
any $r,n,d$ (see \cite{AH} for the original proof and \cite{BO}
for a recent proof).

Although the dimensions of the $\sigma_{r}(X_{n,d})$'s are now all
known, the same is not true for their defining equations: in general
for all $\sigma_{r}(X_{n,d})$'s  the equations coming from
catalecticant matrices (Definition \ref{catalecticant}) are known,
but in many cases they are not enough to describe their ideal; only
in a few cases our knowledge is complete (see for example \cite{K},
\cite{IK}, \cite{CGG}, \cite{Ot} and \cite{LO}). The knowledge
of equations which define $\sigma_{r}(X_{n,d})$, at least
set-theoretically, would give the possibility to compute the
symmetric border rank for any tensor in $S^{d}V$.

A first efficient method to compute the symmetric rank of a
symmetric tensor in $\PPP (S^{d}V)$ when $\dim(V)=2$ is due to
Sylvester \cite{Sy}. More than one version of that algorithm is
known (see \cite{Sy}, \cite{BCMT}, \cite{CS}). In Section
\ref{Two dimensional case} we present a new version of that
algorithm, which gives the symmetric rank of a tensor without
passing through an explicit decomposition of it. The advantage of
not giving an explicit decomposition is that this allows to much
improve the speed of the algorithm. Finding explicit decompositions
is a very interesting open problem (see also \cite{BCMT} and
\cite{LT} for a study of the case $\dim (V)\geq 2$).

The aim of this paper is to explore a ``projective geometry view" of
the problem of finding what are the possible symmetric ranks of a
tensor once its symmetric border rank is given. This amounts to
determining the symmetric rank strata of the varieties
$\sigma_r(X_{n,d})$. We do that in the following four cases:
$\sigma_{r}(X_{1,d})$ (for any $r$ and $d$, see also \cite{BCMT},
\cite{CS},  \cite{LT} and \cite{Sy});  $\sigma_2(X_{n,d})$,
$\sigma_3(X_{n,d})$ (any $n$,$d$, see Section 4);
$\sigma_r(X_{2,4})$, for $r\leq 5$. In the first three cases we
also give an algorithm to compute the symmetric rank. Some of these
results were known or partially known, with different approaches and
different algorithms, e.g in \cite{LT} bounds on the symmetric rank
are given for tensors in $\sigma_3(X_{n,d})$, while the possible
values of the symmetric rank on $\sigma_3(X_{2,3})$ can be found in
\cite{BCMT}, where an algorithm to find the decomposition is given.
In Section \ref{Two dimensional case} we also study the rank of
points on $\sigma_{2}(\Gamma_{d+1})\subset \PP d$,
 with respect to an elliptic normal curve $\Gamma_{d+1}$; for $d=3$,
$\Gamma_4$ gives another example (besides rational normal curves) of
a curve $C \subset \PP n$ for which there are points of $C$-rank
equal to $n$.


\section{Preliminaries}

We will always work with finite dimensional vector spaces defined
over an algebraically closed field $K$ of characteristic $0$.

\begin{defi}\label{srk}
\rm{ Let $V$ be a finite dimensional vector space.  The symmetric rank
$\sr(t)$ of a symmetric tensor $t\in S^{d}V$ is the minimum integer
$r$ such that there exist $v_{1}, \ldots , v_{r}\in V$ such  that
$t=\sum_{j=1}^{r} v_{j}^{\otimes d}$.}
\end{defi}

\begin{nota}\rm{ From now on we will indicate with $T$ the projective class
of a symmetric tensor $t\in S^{d}V$, i.e. if $t\in S^{d}V$ then
$T=[t]\in \PP {}(S^{d}V)$. We will write that an element $T\in \PP
{}(S^{d}V)$ has symmetric rank equal to $r$ meaning that there
exists a tensor $t\in S^{d}V$ such that $T=[t]$  and $\sr(t)=r$.}
\end{nota}

\begin{defi}\label{veronese}\rm{ Let $V$ be a vector space of dimension $n +1$.
The Veronese variety $X_{n,d}=\nu_d(\PP {} (V))
\subset \PP {}(S^{d}V)= \PP {{n+d\choose d}-1}$ is the variety given by the embedding
$\nu_d$ defined by the complete linear system of hypersurfaces of degree $d$ in $\PP n$}.
\end{defi}

\medskip
Veronese varieties parameterize projective classes of symmetric
tensors in $S^{d}V$ of symmetric rank $1$. Actually $T\in X_{n,d}$
if and only if there exists $v\in V$ such that $t=v^{\otimes d}$.

\begin{rem}\label{nud} \rm Let $V$ be a vector space of dimension $n$ and let $l\in V^{*}$ be a linear form.
Now define $\nu_{d}: \PP {}(V^{*}) \rightarrow \PP {}(S^{d}V^{*})$ as  $\nu_{d}([l])=[l^{d}]\in \PP {}(S^{d}V^{*})$.
The image of this map is indeed  the $d$-uple Veronese embedding of $\PP {}(V^{*})$.
\end{rem}

\begin{rem} \rm Remark \ref{nud} shows that, if $V$ is an $n$-dimensional vector space, then, given a basis for $V$, we can associate to any
 symmetric tensor $t\in S^{d}V$ of symmetric rank $r$  a
 homogeneous polynomial of degree $d$ in $n+1$ variables that can be written as a sum of $r$ $d$-th power
  of linear forms (see 3.1).
\end{rem}

\begin{nota}\rm{ If $v_{1}, \ldots , v_{s}$ belong to a vector space $V$, we will
denote with $<v_{1}, \ldots , v_{s}>$ the subspace spanned by them.
If $P_{1}, \ldots , P_{s}$ belong to a projective space $\PP n$ we
will use the same notation $<P_{1}, \ldots ,P_{s}>$ to denote the
projective subspace generated by them.}
\end{nota}

\begin{defi}\label{secant}\rm{Let $X\subset \PP N$ be a projective variety of dimension $n$.
We define the $s$-th secant variety of $X$ as follows:
$$\sigma_{s}(X):= \overline{ \bigcup_{P_{1}, \ldots , P_{s}\in  X} <P_{1}, \ldots , P_{s}>}.$$}
\end{defi}

\begin{nota}\label{sigma0} \rm{We will indicate with $\sigma_{s}^{0}(X)$ the set
  $ \bigcup_{P_{1}, \ldots , P_{s}\in X} <P_{1}, \ldots , P_{s}>$.}
\end{nota}

\begin{nota}\label{Grass}\rm{ With $G(k,V)$ we denote the Grassmannian of $k$-dimensional
subspaces of a vector space $V$, and with $\GG (k-1, \PP {}(V))$ we denote the $(k-1)$-dimensional
projective subspaces of the projective space $\PP {}(V)$. }
\end{nota}

\begin{rem}\label{brkremark1} \rm Let $X\subset \PP N$ be a non degenerate smooth variety. If
$P\in \sigma_{r}^{0}(X)\setminus \sigma_{r-1}^{0}(X)$ then the
minimum number of distinct points $P_1,...,P_s \in X$ such that  $P\in <P_1,...,P_s>$
 is obviously $r$, which is achieved on $ \sigma_{r}^{0}(X)$. We want to study what is that minimum number
 in $\sigma_{r}(X)\setminus( \sigma_{r}^{0}(X)\cup \sigma_{r-1}(X))$.
\end{rem}

\begin{propos}\label{brkremark2} Let $X\subset \PP N$ be a non degenerate smooth variety.
Let  $H_r$ be the irreducible component of the Hilbert scheme of
0-dimensional schemes of degree $r$ of $X$ containing $r$ distinct
points, and assume that for each $y\in H_r$, the corresponding
subscheme $Y$ of $X$ imposes independent conditions to linear forms.
Then for each $P\in \sigma_{r}(X)$ $\setminus \sigma_{r}^{0}(X)$
there exists a 0-dimensional scheme $Z\subset X$ of degree $r$  such
that $P\in <Z>\cong \PP {r-1}$.
\par
Conversely if there exists $Z\in H_{r}$ such that $P\in <Z>$, then $P\in \sigma_{r}(X)$.
\end{propos}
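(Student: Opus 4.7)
\medskip
\noindent\emph{Proof plan.}

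The plan is to build an incidence correspondence parametrising pairs $(Z,P)$ with $Z\in H_r$ and $P\in \langle Z\rangle$, and then deduce both implications from the properness of $H_r$ together with the independent--conditions hypothesis.

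First I would define
$$\mathcal I := \{(Z,P)\in H_r\times \PP N \ :\ P\in \langle Z\rangle\}.$$
To make sense of $\langle Z\rangle$ uniformly, recall that, for any length--$r$ subscheme $Z\subset \PP N$, the linear span $\langle Z\rangle$ is the zero locus of $H^0(\mathcal I_Z(1))$. Over $H_r$ the direct image of $\mathcal I_{\mathcal Z}(1)$, where $\mathcal Z\subset H_r\times\PP N$ is the universal subscheme, has fibre $H^0(\mathcal I_Z(1))$ at $Z$, which has dimension $N+1-r$ by the independent--conditions hypothesis. So by Grauert's theorem (or simple semicontinuity together with the constant dimension assumption) this is a vector bundle of rank $N+1-r$ on $H_r$. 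Its annihilator gives a rank--$r$ subbundle of the trivial bundle $H_r\times K^{N+1}$, and projectivising yields a $\PP{r-1}$--bundle $\mathcal I\to H_r$. In particular $\mathcal I$ is irreducible and projective (since $H_r$ is a projective component of the Hilbert scheme of $X$), and every fibre $\langle Z\rangle$ has dimension exactly $r-1$.

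Next I would consider the second projection $\pi:\mathcal I\to \PP N$, which is a morphism of projective varieties, hence has closed image. Inside $H_r$ the locus $U$ of $r$ distinct points of $X$ is open and dense (by definition of $H_r$), and for $Z=\{P_1,\dots,P_r\}\in U$ we have $\langle Z\rangle=\langle P_1,\dots,P_r\rangle$. Therefore $\pi(\pi^{-1}(U))=\sigma_r^0(X)$, and so
$$\sigma_r(X)=\overline{\sigma_r^0(X)}\seq \pi(\mathcal I).$$
Conversely, any $(Z,P)\in \mathcal I$ is a limit of pairs $(Z_t,P_t)$ with $Z_t\in U$ and $P_t\in \langle Z_t\rangle\seq \sigma_r^0(X)$; passing to the limit in the closed set $\sigma_r(X)$ gives $P\in \sigma_r(X)$. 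Hence $\pi(\mathcal I)=\sigma_r(X)$.

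From this equality both assertions follow: if $P\in \sigma_r(X)\setminus \sigma_r^0(X)$ then $P\in \pi(\mathcal I)$, so there is $Z\in H_r$ of degree $r$ with $P\in \langle Z\rangle \cong \PP{r-1}$ (the isomorphism coming from the fact that the fibres of $\pi$ have the expected dimension, thanks to the independent--conditions hypothesis); and conversely any such $Z$ yields $P\in \pi(\mathcal I)=\sigma_r(X)$. The main technical point, and the only place where the hypothesis enters essentially, is the constancy of $\dim\langle Z\rangle$ on $H_r$: without it the set $\mathcal I$ could fail to be a projective bundle, its image could jump or be properly contained in $\sigma_r(X)$, and the argument producing a length--$r$ scheme limiting to $P$ would break down.
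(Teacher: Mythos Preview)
Your proof is correct and follows essentially the same approach as the paper: both build the incidence correspondence of pairs $(Z,P)$ with $Z\in H_r$ and $P\in\langle Z\rangle$, use the independent-conditions hypothesis to see this is a $\PP{r-1}$-bundle over the irreducible projective $H_r$, and conclude that its (closed, irreducible) image in $\PP N$ equals $\sigma_r(X)$. The paper factors through a morphism $H_r\to\GG(r-1,\PP N)$ and the universal incidence over the Grassmannian rather than constructing the bundle directly, but this is a cosmetic difference; one small slip in your write-up is that $\pi^{-1}(U)$ should be the preimage of $U$ under the \emph{first} projection, not under $\pi$.
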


\begin{proof} Let us consider the map $\phi: H_r \to \Bbb G (r-1,\PP N)$, $\phi
(y)=<Y>$. The map $\phi$ is well defined since $\dim <Y>=r-1$ for
all $y\in H_r$ by assumption. Hence $ \phi (H_r)$ is closed in $\Bbb
G (r-1,\PP N)$.

\par Now let ${\mathcal I }\subset \PP N \times \Bbb G (r-1,\PP N)$ be the incidence variety,
 and $p$, $q$ its projections on $\PP N$ and on $ \Bbb G (r-1,\PP N)$
respectively; then, $A:= pq^{-1}  (\phi (H_r))$ is closed in $\PP N$. Moreover, $A$ is irreducible
since $H_r$ is irreducible, so $\sigma_{r}^{0}(X)$ is dense in $A$.
Hence $\sigma_{r}(X)=\overline {\sigma_{r}^{0}(X)}=A$.
\end{proof}

 In the following we will use Proposition \ref{brkremark2} when
$X=X_{n,d}$, a Veronese variety, in many cases.
\par \medskip
\begin{rem}\label{brkremark3}\rm Let $n=1$; in this case the Hilbert scheme of 0-dimensional
schemes of degree $r$ of $X =X_{1,d}$ is irreducible; moreover,
for all $y$ in the Hilbert scheme, $Y$ imposes independent
conditions to forms of any degree.

\par \medskip Also for $n=2$ the Hilbert scheme of 0-dimensional
schemes of degree $r$ of $X=X_{2,d}$ is irreducible.  Moreover, in
the cases that we will study here, $r$ is always small enough with
respect to $d$, so to imply that all the elements in the Hilbert
scheme impose independent conditions  to forms of degree $d$.

\par Hence in the two cases above $P\in \sigma_{r}(X)$ if and only if there exists a scheme
$Z\subset X$ of degree $r$  such that $P\in<Z>\simeq \PP {r-1}$.
\end{rem}

\par  \medskip Now we give an example which shows that
an $(r-1)$-dimensional linear space contained in $\sigma_{r}(X)$ is
not always spanned by a $0$-dimensional scheme of $X$ of degree $r$.
Let $n=2$, $d=6$, and consider $X=X_{2,6}=\nu_6(\PP 2)\subset \PP
{27}$ The first $r$ for which $ \sigma_{r}(X)$ is the whole of $\PP
{27}$ is 10; we will consider
 $ \sigma_{8}(X)\subset \PP
{27}$. Let $Z\in \PP 2$ be a scheme  which is the union of 8
distinct points on a line $L\subset \PP {2}$. The curve $\nu _6(L)$
is a rational normal curve $C_6$ in its span  $<C_6>\cong \PP 6$, so
$\dim <\nu_6(Z)>= 6$. Moreover, since $Z$ imposes only 6 conditions
to curves of degree six in $\PP 2$, then $\nu(Z)$ does not impose
independent conditions to linear forms in $\PP {27}$.  Now every
linear 7-dimensional space $\Pi \subset \PP {27}$ containing $C_6$,
meets $X$ along $C_6$ and no other point; hence there does not exist
a 0-dimensional scheme $B$ of degree $8$ on $X$ such that $ <B>
\supset <\nu_6(Z)>$ and $ <B>= \Pi$. On the other hand, consider a
1-dimensional flat family whose generic fiber $Y$ is the union of 8
distinct points on $X$ (hence $\dim <Y>= 7$) and such that $\nu(Z)$
is a special fiber of the family. If we consider the closure of the
corresponding family of linear spaces with generic fiber $<Y>$, this
is still is a 1-dimensional flat family, so it has to have a linear
space $\Pi_0 \cong \PP 7$ as special fiber. Hence  $\sigma _8(X)$
contains linear spaces of dimension 7 as $\Pi_0$, such that
 $<\nu_6(Z)> \subset \Pi_0$, but for no subscheme $Y'$ of degree 8 on $X$ we have $\Pi_0 = <Y'>$.

\begin{rem}  \rm A tensor $t\in S^{d}V$ with $\dim(V)=n+1$ has symmetric rank
$r$ if and only if  $T\in \sigma^{0}_{r}(X_{n,d})$ and, for any
$s<r$, we have that $T\notin \sigma^{0}_{s}(X_{n,d})$. In fact by
definition of symmetric rank of an element $T \in S^{d}V$, there
should exist $r$ elements (and no less) $T_{1}, \ldots , T_{r}\in
X_{n,d}$ corresponding to tensors $t_{1}, \ldots , t_{r}$ of
symmetric rank one such that $t=\sum_{i=1}^{r}t_{i}$. Hence $T\in
\sigma^{0}_{r}(X_{n,d})\setminus \sigma_{r-1}^{0}(X_{n,d})$.
\end{rem}

\begin{defi}\label{sbr} \rm{ If  $T\in \sigma_{s}(X_{n,d})\setminus \sigma_{s-1}(X_{n,d})$, we say that
$t$ has symmetric border rank $s$, and we write $\underline{\sr}(t)=s$.}
\end{defi}

\begin{rem} \rm The symmetric border rank of $t \in S^{d}V$, with $\dim(V)=n+1$,  is the smallest
$s$  such that $T\in \sigma_{s}(X_{n,d})$. Therefore  $\sr(t) \geq
\underline{\sr}(t)$. Moreover if $T\in \sigma_{s}(X_{n,d})\setminus
\sigma_{s}^{0}(X_{n,d})$ then $\sr(t)>s$.
\end{rem}

The following notation will turn out to be useful in the sequel.

\begin{nota}\label{sigmasq}\rm{ We will indicate with $\sigma_{b,r}( X_{n,d})\subset \PP {}(S^{d}V)$ the set:
$$\sigma_{b,r}( X_{n,d}):= \{T \in \sigma_{b}(X_{n,d})\setminus \sigma_{b-1}(X_{n,d})| \sr (T) =r\},$$
i.e. the set of the points in $\PPP (S^{d}V)$ corresponding to
symmetric tensor whose symmetric border rank is $b$ and whose
symmetric rank is $r$.}
\end{nota}
\medskip
It is not easy to get a geometric description of the loci
$\sigma_{b,r}( X_{n,d})$'s; we think that (when the base field is
algebrically closed) they should be locally closed (when $n=1$, i.e.
for rational normal curves, this follows from Corollary
\ref{strata}), but we have no general reference for that.

\bigskip


\section{Two dimensional case}\label{Two dimensional case}

In this section we will restrict to the case of a $2$-dimensional
vector space $V$. We first describe the Sylvester algorithm which
gives the symmetric rank of a symmetric tensor $t\in S^{d}V$ and a
decomposition of $t$ as a sum of $r=\sr(t)$ symmetric tensors of
symmetric rank one (see \cite{Sy} \cite{CS}, \cite{BCMT}). Then
we give a geometric description of the situation and a slightly
different algorithm which produces the symmetric rank of a symmetric
tensor in $S^{d}V$ without giving explicitly its decomposition. This
algorithm makes use of a result (see Theorem \ref{curve}) which
describes the rank of tensors on the secant varieties of the rational
normal curve $C_d = X_{1,d}$; this Theorem has been proved in the
unpublished paper \cite{CS} (see also \cite{LT}); here we give a
proof which uses only classical projective geometry.

Moreover we extend part of that result to elliptic normal curves, see
Theorem \ref{ellcurve}.


\subsection{The Sylvester algorithm}\label{Sylvester algorithm}

Let $p\in K[x_{0}, x_{1}]_{d}$ be a homogeneous polynomial of degree
$d$ in two variables: $p(x_{0},x_{1})=\sum_{k=0}^{d}
a_{k}x_{0}^{k}x_{1}^{d-k}$; then we can associate to the form $p$ a
symmetric tensor $t\in S^{d}V\simeq K[x_{0}, x_{1}]_{d}$ where $t=(b_{i_{1}, \ldots , i_{d}})_{i_{j}\in\{0,1\}; j=1, \ldots ,
d}$, and
$ b_{i_{1}, \ldots , i_{d}}={d\choose k}^{-1}\cdot a_{k}$ for any $d$-uple
$(i_{1}, \ldots , i_{d})$ containing exactly $k$ zeros. This
correspondence is clearly one to one:
\begin{equation}\label{corrisp}\begin{array}{rcl}K[x_{0}, x_{1}]_{d} &\leftrightarrow &S^{d}V
\\
\sum_{k=0}^{d} a_{k}x_{0}^{k}x_{1}^{d-k} &\leftrightarrow &(b_{i_{1}, \ldots , i_{d}})_{i_{j}=0,1;\; j=1, \ldots , d}
\end{array}\end{equation}
with $(b_{i_{1}, \ldots , i_{d}})$ as above.

\smallskip
The algorithm uses Catalecticant matrices, which are matrices that
we can associate to a polynomial $p(x_{0},x_{1})=\sum_{k=0}^{d}
a_{k}x_{0}^{k}x_{1}^{d-k}$, or to the symmetric tensor $t$
associated to it. We give below the definition of Catalecticant
matrices $M_{d-r,r}^n$ and $M_{d-r,r}(t)$ in the general case, see \cite{Ge} or \cite{K}; $M_{d-r,r}(t)$ is
also called Hankel matrix in \cite{BCMT}).

\begin{defi}\label{catalecticant}
Let $R=k[x_0,...,x_n]$ and $i,j,d \in \mathbb N$ with $i+j=d$.
Consider the bilinear map given by multiplication:
$$R_i\times R_j \rightarrow R_d.$$
If we fix in $R_i$,$R_j$ the natural bases given by monomials (say
in lex order), the map above can be represented by a $({n+i \choose
n})\times ({n+j \choose n})$ matrix $A$. The $(i,j)$-catalecticant
Matrix of $R$: $M_{i,j}^n$ is the $({n+i \choose n})\times ({n+j
\choose n})$ matrix whose entries are the indeterminates
$z_{\underline{\alpha}}$,
$\underline{\alpha}=(\alpha_0,...,\alpha_n) \in \mathbb {N }^n$, with $|\underline{\alpha}|=d$. For each entry $m_{u,v}$ of $M_{i,j}^n$, we have
 $m_{u,v}= z_{\underline{\alpha}}$ if the entry $a_{u,v}$
in $A$ is associated to multiplication of two monomials which yields
$\underline{x}^{\underline{\alpha}} =
x_0^{\alpha_0}...x_n^{\alpha_n}$.

\medskip
Example. Let $n=d=2$, $i=j=1$; we get:
$$M_{1,1}^2 = \begin{pmatrix}z_{2,0,0}&  z_{1,1,0}& z_{1,0,1}\cr z_{1,1,0}& z_{0,2,0}&z_{0,1,1}\cr
z_{1,0,1}&z_{0,1,1}&z_{0,0,2}
\end{pmatrix}.$$

If we consider the new variables as coordinates in $\PP N$, $N =
{n+d \choose n}-1$, it is well known that the ideal of the $2\times
2$ minors of $M_{i,j}^n$ is the defining ideal of the Veronese
variety $X_{n,d}=\nu_d(\PP n)$.

Now consider a form $p\in R_d$. The $(i,j)$-catalecticant Matrix of
$p$, $M_{i,j}(p)$, is the numerical matrix which yields:
$$ (x_0^{\beta_0},...,x_n^{\beta_n})\cdot M_{i,j}(p)\cdot {}^t(x_0^{\gamma_0},...,x_n^{\gamma_n}) = f(x_0,...,x_n),$$
where  $ \{x_0^{\beta_0},...,x_n^{\beta_n}\}$,
$\{x_0^{\gamma_0},...,x_n^{\gamma_n}\}$ are the bases for
$R_i$,$R_j$, respectively.

Since we are more interested in tensors, we will always write
$M_{i,j}(t)$ or $M_{i,j}(T)$, instead of $M_{i,j}(p)$, where $t$ is
the symmetric tensor associated to $p$ (as we did at the beginning
of the section in the 2-dimensional case), and $T$ is its projective
class in $\PPP (S^d(V))$.
\end{defi}

\medskip
\begin{rem}\label{catalecticant1}
\rm{ When considering the two dimensional case, it is easier to
describe $M_{i,j}(p)$ more explicitely. Let
$p(x_{0},x_{1})=\sum_{k=0}^{d} a_{k}x_{0}^{k}x_{1}^{d-k}$, and $t
=(b_{i_1},...,b_{i_d})_{i_{j}=0,1;\; j=1, \ldots , d}\in S^{d}V$ be
the symmetric tensor associated to $p$, as we did at the beginning
of section. Then the Catalecticant matrix $M_{d-r,r}(t)$ associated
to $t$ (or to $p$) is the $(d-r+1) \times (r+1)$ matrix with
entries:\quad
 $c_{i,j}={d\choose
i}^{-1}a_{i+j-2}$ with $i=1, \ldots , d-r$ and $j=1, \ldots , r$. }
\end{rem}

We describe here the version of the Sylvester algorithm that can be
found in \cite{Sy}, \cite{CS}, or \cite{BCMT}:
\begin{alg}\label{SylvesterOld}\rm{
\textbf{Input}: A binary form $p(x_{0},x_{1})$ of degree $d$ or, equivalently, its associated symmetric tensor $t$.\\
\textbf{Output}: A decomposition of $p$ as
$p(x_{0},x_{1})=\sum_{j=1}^{r}\lambda_{j}l_{j}(x_{0}, x_{1})^{d}$
with $\lambda_{j}\in K$ and $l_{j}\in K[x_{0},x_{1}]_{1}$ for $j=1,
\ldots ,r$ with $r$ minimal.
\begin{enumerate}
\item Initialize $r=0$;
\item\label{syl2} Increment $r \leftarrow r+1$;
\item If the rank of the matrix $M_{d-r,r}(t)$ is maximum, then go to step $2$;
\item Else compute a basis $\{l_{1}, \ldots , l_{h}\}$ of the right kernel of $M_{d-r,r}$;
\item Specialization:
\begin{itemize}
\item Take a vector $q$ in the right kernel of $M_{d-r,r}(t)$, e.g. $q=\sum_{i}\mu_{i}l_{i}$;
\item Compute the roots of the associated polynomial $q(x_{0},
x_{1})=\sum_{h=0}^{r}q_{h}x_{0}^{h}x_{1}^{d-h}$ and denote them by
$(\alpha_{j},\beta_{j})$, where
$|\alpha_{j}|^{2}+|\beta_{j}|^{2}=1$;
\item If the roots are not distinct in $\PP 1$, go to step \ref{syl2};
\item Else if $q(x_{0},x_{1})$ admits $r$ distinct roots then compute coefficients $\lambda_{j}$, $1\leq j \leq r$, by solving the linear system below:
$$\left( \begin{array}{ccc}
\alpha_{1}^{d} &\cdots & \alpha_{r}^{d}\\
\alpha_{1}^{d-1}\beta_{1} &\cdots & \alpha_{r}^{d-1}\beta_{r}\\
\alpha_{1}^{d-2}\beta_{1}^{2} &\cdots & \alpha_{r}^{d-2}\beta_{r}^{2}\\
\vdots &\vdots &\vdots \\
\beta_{1}^{d} &\cdots &\beta_{r}^{d}
\end{array}
\right)
\lambda =
\left(
\begin{array}{c}
a_{0}\\
1/d a_{1}\\
{d\choose 2}^{-1}a_{2}\\
\vdots \\
a_{d}
\end{array}
\right);$$
\end{itemize}
\item The required decomposition is $p(x_{0},x_{1})=\sum_{j=1}^{r}\lambda_{j}l_{j}(x_{0},x_{1})^{d}$,
where $l_{j}(x_{0},x_{1})=(\alpha_{j}x_{1}+\beta_{j}x_{2})$.
\end{enumerate}}
\end{alg}


\subsection{Geometric description}

If $V$ is a two dimensional vector space, there is a well known
isomorphism between $\bigwedge^{d-r+1}(S^{d}V)$ and $
S^{d-r+1}(S^{r}V)$ ,(see \cite{Mu}). When $d\geq r$ such isomorphism can be
interpreted in terms of projective algebraic varieties; it allows to
view the  $(d-r+1)$-uple Veronese embedding of $\PP r$,  as the set
of $(r-1)$-dimensional projective subspaces of $\PP {d}$ that are
$r$-secant to the rational normal curve. The description of this
result, via coordinates, was originally given by A. Iarrobino, V.
Kanev (see \cite{IK}). We give here the description appeared in
\cite{AB} (Lemma 2.1) (Notations as in \ref{Grass}).

\begin{lemma}\label{LemmaAB} \label{nu_d(Pn)intG(n-1,n+d-1)} Consider the map
$\phi_{r,d-r+1}:\PP{}(K[t_0,t_1]_r)\to G(d-r+1,K[t_0,t_1]_{d})$ that
sends the class of $p_0\in K[t_0,t_1]_r$ to the
$(d-r+1)$-dimensional subspace of $K[t_0,t_1]_{d}$ of forms of the
type $p_0q$, with $q\in K[t_0,t_1]_{d-r}$. Then the following hold:
\item{(i)} The image of $\phi_{r,d-r+1}$, after the Pl\"ucker
embedding of $G(d-r+1,K[t_0,t_1]_{d})$, is the Veronese variety $X_{r,d-r+1}$.
\item{(ii)} Identifying, by duality, $G(d-r+1,K[t_0,t_1]_{d})$ with the
Grassmann variety of subspaces of dimension $r-1$ in
$\PP{}(K[t_0,t_1]_{d}^*)$, the above Veronese variety is the
set of $r$-secant spaces to a rational normal curve
$C_{d}\subset\PP{}(K[t_0,t_1]_{d}^*)$.
\end{lemma}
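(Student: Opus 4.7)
The plan is to establish (ii) first, since its geometric content motivates the computations in (i). Realize $C_d\subset\PPP(K[t_0,t_1]_d^*)$ as the image of $\PPP(K[t_0,t_1]_1)$ under the $d$-uple Veronese $P=[\alpha:\beta]\mapsto[\mathrm{ev}_P]$, where $\mathrm{ev}_P(f):=f(\alpha,\beta)$. Fix $p_0\in K[t_0,t_1]_r\setminus\{0\}$ with roots $P_1,\dots,P_r\in\PP{1}$ counted with multiplicity. Since multiplication by $p_0$ is injective on $K[t_0,t_1]_{d-r}$, the subspace $W_{p_0}:=p_0\cdot K[t_0,t_1]_{d-r}$ has dimension $d-r+1$ for every nonzero $p_0$, so $\phi_{r,d-r+1}$ is defined on all of $\PPP(K[t_0,t_1]_r)$. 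When the $P_i$ are distinct, $f\in W_{p_0}$ iff $p_0\mid f$ iff $\mathrm{ev}_{P_i}(f)=0$ for every $i$, whence $W_{p_0}^{\perp}=\langle\mathrm{ev}_{P_1},\dots,\mathrm{ev}_{P_r}\rangle$. Under the duality $G(d-r+1,K[t_0,t_1]_d)\cong\GG(r-1,\PPP(K[t_0,t_1]_d^*))$, this is precisely the $(r-1)$-plane spanned by the $r$ points $[\mathrm{ev}_{P_i}]\in C_d$. As $\phi_{r,d-r+1}$ is a morphism and polynomials with distinct roots are dense, the image lies in the closure of such secant $(r-1)$-planes; conversely any $r$ points of $C_d$ determine a unique $[p_0]$ whose roots they are, so (ii) follows.

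For (i), the crucial dimension identity is
\[
\dim \bigwedge\nolimits^{d-r+1}K[t_0,t_1]_d=\binom{d+1}{d-r+1}=\binom{r+(d-r+1)}{r}=\dim_K S^{d-r+1}(K[t_0,t_1]_r),
\]
so the Plücker ambient $\PPP(\bigwedge^{d-r+1}K[t_0,t_1]_d)$ and the natural ambient of the $(d-r+1)$-uple Veronese of $\PP{r}$ share the same projective dimension $\binom{d+1}{r}-1$. Choose the monomial basis $\{t_0^{d-r-j}t_1^{j}\}_{j=0}^{d-r}$ of $K[t_0,t_1]_{d-r}$. For $p_0=\sum_{i=0}^r a_i\,t_0^{r-i}t_1^i$, the generators $p_0\,t_0^{d-r-j}t_1^{j}$ of $W_{p_0}$ have coordinates in the monomial basis of $K[t_0,t_1]_d$ forming a $(d+1)\times(d-r+1)$ band matrix with $(k,j)$-entry $a_{k-j}$ (with the convention $a_i=0$ outside $0\le i\le r$). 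Its maximal minors are the Plücker coordinates of $W_{p_0}$, each homogeneous of degree $d-r+1$ in $a_0,\dots,a_r$; hence the composite $\Phi\colon\PP{r}\to\PPP(\bigwedge^{d-r+1}K[t_0,t_1]_d)$ is defined by a linear subsystem of $|\mathcal{O}_{\PP{r}}(d-r+1)|$, and $\Phi$ is injective on points because $p_0$ is recovered from $W_{p_0}$ as the greatest common divisor of any basis. Therefore $\Phi$ factors as the complete $(d-r+1)$-uple Veronese embedding followed by a linear map between projective spaces of equal dimension.

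The main obstacle is to verify that this last linear map is an isomorphism, equivalently, that the $\binom{d+1}{r}$ maximal minors of the band matrix are linearly independent as polynomials in $a_0,\dots,a_r$. A direct combinatorial route exhibits each monomial $a_0^{k_0}\cdots a_r^{k_r}$ of degree $d-r+1$ as the leading term (up to sign) of a minor obtained from a ``staircase'' row selection; the extremal cases $a_0^{d-r+1}$ (rows $0,\dots,d-r$, yielding a lower-triangular matrix with diagonal $a_0$) and $a_r^{d-r+1}$ (rows $r,\dots,d$) are immediate, and the general case follows by interpolating row choices. Since the space of degree $d-r+1$ forms in $r+1$ variables has dimension exactly $\binom{d+1}{r}$, linear independence forces the minors to span the full linear system, so $\Phi$ is the complete $(d-r+1)$-uple Veronese embedding and its image coincides with $X_{r,d-r+1}$, completing (i).
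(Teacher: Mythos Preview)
Your argument is correct and structurally mirrors the paper's: both set up the same band (Toeplitz) matrix whose rows represent the basis $p_0 t_0^{d-r}, p_0 t_0^{d-r-1}t_1,\dots,p_0 t_1^{d-r}$, and both reduce (i) to the fact that its maximal minors form a basis of $K[u_0,\dots,u_r]_{d-r+1}$. The paper simply cites Arrondo--Paoletti \cite{AP} for this fact and for the identification in (ii), whereas you supply self-contained arguments---the staircase/leading-term combinatorics for linear independence of the minors, and the evaluation-functional plus density argument for the secant description---so your version is more detailed but not a different route. One minor remark: your injectivity observation via the $\gcd$ is true but not load-bearing (injectivity alone would not preclude a nontrivial linear projection of the Veronese); the real work, as you correctly locate it, is the linear independence of the minors.
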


\begin{proof} Write
$p_0=u_0t_0^r+u_1t_0^{r-1}t_1+\dots+u_rt_1^r$. Then a basis of
the subspace of $K[t_0,t_1]_{d}$ of forms of the
type $p_0q$ is given by:
\begin{equation}\label{rsecante}
\begin{array}{l}
u_0t_0^{d}+\cdots +u_rt_0^{d-r}t_1^r \\
\;\; \; u_0t_0^{d-1}t_1 +\cdots +u_rt_0^{d-r-1}t_1^{r+1} \\
 \;\; \;\; \; \ddots\\
\; \; \; \; \; \; \; \; \; u_0t_0^rt_1^{d-r}+\cdots +u_rt_1^{d}.
\\
\end{array}
 \end{equation}
The coordinates of these elements with respect to the basis
$\{t_0^{d},t_0^{d-1}t_1,\dots,t_1^{d}\}$ of
$K[t_0,t_1]_{d}$ are thus given by the rows of the matrix
$$\left(\begin{array}{cccccccc}
u_0&u_1&\dots&u_r&0&\dots&0&0\\
0&u_0&u_1&\dots&u_r&0&\dots&0\\
\vdots&\ddots&\ddots&\ddots&&\ddots&\ddots&\vdots\\
0&\dots&0&u_0&u_1&\dots&u_r&0\\
0&\dots&0&0&u_0&\dots&u_{r-1}&u_r
\end{array}
\right).$$
The standard Pl\"ucker coordinates of the subspace
$\phi_{r,d-r+1}([p_0])$ are the maximal minors of this matrix. It is
known (see for example \cite{AP}), that these minors form a basis of
$K[u_0,\dots,u_r]_{d-r+1}$, so that the image of $\phi$ is indeed a
Veronese variety, which proves (i).

To prove (ii), we recall some standard facts from \cite{AP}. Take
homogeneous coordinates $z_0,\dots,z_{d}$ in
$\PP{}(K[t_0,t_1]_{d}^*)$ corresponding to the dual basis of
$\{t_0^{d},t_0^{d-1}t_1,\dots,t_1^{d}\}$. Consider
$C_{d}\subset\PP{}(K[t_0,t_1]_{d}^*)$ the standard rational normal
curve with respect to these coordinates. Then, the image of $[p_0]$
by $\phi_{r,d-r+1}$ is precisely the $r$-secant space to $C_{d}$
spanned by the divisor on $C_{d}$ induced by the zeros of $p_0$.
This completes the proof of (ii).
\end{proof}

 Since $\dim(V)=2$,  the Veronese variety of $\PP {}(S^{d}V)$ is  the rational normal curve
 $C_{d} \subset \PP d$. Hence, a symmetric tensor $t\in S^{d}V$ has symmetric rank $r$ if
 and only if $r$ is the minimum integer for which there exists a $\PP {r-1}=\PP {}(W)\subset \PP {}(S^{d}V)$
 such that $T\in \PP {}(W)$ and $\PP {}(W)$ is $r$-secant to the rational normal curve  $C_{d}\subset\PP {}(S^{d}V)$ in $r$ distinct points.
\\
Consider the maps:
\begin{equation}\label{star}\PP {}(K[t_{0},t_{1}]_{r}) \stackrel{\phi_{r, d-r+1}}{\rightarrow}\GG (d-r, \PP {}( K[t_{0},t_{1}]_{d}))
\stackrel{\alpha_{r, d-r+1}}{\simeq} \GG (r-1,  \PP {}(K[t_{0},t_{1}]_{d})^{*}).\end{equation}
Clearly, since $\dim(V)=2$, we can identify $\PP {}(K[t_{0},t_{1}]_{d})^{*} $ with $\PP {}(S^{d}V)$, hence the
Grassmannian $\GG (r-1 , \PP {}(K[t_{0},t_{1}]_{d})^{*})$ can be identified with $\GG (r-1, \PP {}(S^{d}V))$. \\
Now, by Lemma \ref{LemmaAB}, a projective subspace $\PP {}(W)$ of
$\PP {}(K[t_{0},t_{1}]_{d})^{*}\simeq \PP {}(S^{d}V)\simeq \PP d$ is
$r$-secant to $C_{d}\subset \PP {}(S^{d}V)$ in $r$ distinct points
if and only if it belongs to  $\mathrm{Im}(\alpha_{r, d-r+1} \circ
\phi_{r,d-r+1})$ and the preimage of $\PP {}(W)$ via $\alpha_{r,
d-r+1} \circ \phi_{r,d-r+1}$ is a polynomial with $r$ distinct
roots.
\\
Therefore, a symmetric tensor $t\in S^{d}V$ has symmetric rank $r$ if and only if $r$ is the minimum integer for which:
\begin{enumerate}
\item $T$ belongs to an element  $\PP {}(W)\in \mathrm{Im}(\alpha_{r, d-r+1} \circ \phi_{r,d-r+1} )\subset \GG (r-1, \PP {}(S^{d}V))$,
\item there exists a polynomial $p_{0}\in K[t_{0},t_{1}]_{r}$ such that
$\alpha_{r, d-r+1} ( \phi_{r,d-r+1}([p_{0}]) )=\PP {}(W)$ and $p_{0}$  has $r$ distinct roots.
\end{enumerate}
Fix the natural basis $\Sigma=\{t_{0}^{d}, t_{0}^{d-1}t_{1}, \ldots
, t_{1}^{d}\}$ in $K[t_{0}, t_{1}]_{d}$. Let $\PP {}(U)$ be a
$(d-r)$-dimensional projective subspace of $\PP {}(K[t_{0},
t_{1}]_{d})$. The proof of Lemma \ref{LemmaAB} shows that $\PP
{}(U)$ belongs to the image of $\phi_{r,d-r+1}$ if and only if there
exist $u_{0}, \ldots , u_{r}\in K$ such that $U=<p_{1}, \ldots ,
p_{d-r+1}>$ with $p_{1}=(u_{0}, u_{1},\ldots , u_{r}, 0, \ldots ,
0)_{\Sigma}$, $p_{2}=(0, u_{0}, u_{1},\ldots , u_{r}, 0, \ldots ,
0)_{\Sigma}$, . . . , $ p_{d-r+1}=(0, \ldots , 0, u_{0},
u_{1},\ldots , u_{r})_{\Sigma}$.
\\
Now let $\Sigma^{*}=\{z_{0}, \ldots ,  z_{d}\}$ be the dual basis of
$\Sigma$. Therefore there exists a $W\subset S^{d}V$ such that  $\PP
{}(W)=\alpha_{r,d-r+1}(\PP {}(U))$ if and only if  $W=H_{1}\cap
\cdots \cap H_{d-r+1}$ and the $H_{i}$'s are as follows:
$$
\begin{array}{rl}
H_{1}:& u_0z_{0}+\cdots +u_rz_{r}=0 \\
H_{2}: & \;\; \; u_0z_{1} +\cdots +u_rz_{r+1}=0 \\
\;\; \;\; \;    \;\; \;&\;    \;\;\; \;\; \; \ddots\\
H_{d-r+1}: & \; \; \; \; \; \;  u_0z_{d-r}+\cdots +u_rz_{d}=0.
\end{array}$$
This is sufficient to conclude that $T\in \PP {}(S^{d}V)$ belongs
to an $(r-1)$-dimensional projective subspace of $\PP {}(S^{d}V)$
that is in the image of $\alpha_{r, d-r+1} \circ \phi_{r,d-r+1}$
defined in (\ref{star}) if and only if there exist $H_{1}, \ldots
, H_{d-r+1}$ hyperplanes in $S^{d}V$ as above such that $T\in
H_{1}\cap \ldots \cap H_{d-r+1}$.
\\
Given $t=(a_{0}, \ldots ,a_{d})_{\Sigma^{*}}\in S^{d}V$, $T\in
H_{1}\cap \ldots \cap H_{d-r+1}$ if and only if the following linear
system admits a non trivial solution:
$$\left\{ \begin{array}{l}
u_0a_{0}+\cdots +u_ra_{r}=0 \\
u_0a_{1} +\cdots +u_ra_{r+1}=0 \\
 \vdots\\
u_0a_{d-r}+\cdots +u_ra_{d}=0.
\end{array}\right.$$
If $d-r+1<r+1$ this system admits an infinite number of solutions.\\
If $r\leq d/2$, it admits a non trivial solution if and only if  all
the maximal $(r+1)$-minors of the following $(d-r+1)\times (r+1)$
catalecticant matrix, defined in Definition  \ref{catalecticant},
vanish :
$$M_{d-r,r}= \left(
\begin{array}{ccc}
a_{0} & \cdots & a_{r}\\
a_{1} & \cdots & a_{r+1}\\
\vdots && \vdots \\
a_{d-r} & \cdots & a_{d}
\end{array}
\right).$$

The following three remarks contain results on rational normal
curves and their secant varieties that are classically known and
that we will need in our description.

\begin{rem} \rm The dimension of $\sigma_{r}(C_{d})$ is the minimum between $2r-1$
and $d$. Actually $\sigma_{r}(C_{d})\subsetneq \PP d$ if and only if $1\leq r < \left\lceil \frac{d+1}{2} \right\rceil$.
\end{rem}

\begin{rem} \rm An element $T\in \PP d$ belongs to $\sigma_{r}(C_{d})$ for
$1\leq r < \left\lceil \frac{d+1}{2} \right\rceil$  if and only if
the catalecticant matrix $M_{r,d-r}$ defined in Definition
\ref{catalecticant} does not have maximal rank.
\end{rem}

\begin{rem} \rm Any divisor $D\subset C_d$, with $\deg D\leq d+1$,  is such that $\dim <D> = \deg D
-1$.
\end{rem}

The following result has been proved by G. Comas and M. Seiguer in
the unpublished paper \cite{CS} (see also \cite{LT}), and it
describes the structure of the stratification by symmetric rank of
symmetric tensors in $S^{d}V$ with $\dim(V)=2$. The proof we give
here is a strictly ``projective geometry" one.
\bigskip

\begin{thm}\label{curve}
 Let $V$ be a 2-dimensional vector space and $X_{1,d} = C_d\subset \PPP (S^dV)$, be the rational normal curve, parameterizing
 decomposable symmetric tensors  ($C_d = \{T \in \PPP (S^{d}V) \, |\, \sr (T) =1\}$), i.e. homogeneous
 polynomials in $K[t_0,t_1]_d$ which are $d$-th powers of linear forms. Then:
$$
 \forall \, r,\ 2\leq r\leq \left\lceil{d+1\over 2}\right\rceil: \quad \qquad \sigma_r(C_d) \setminus  \sigma_{r-1}(C_d) = \sigma_{r,r}(C_{d})\cup \sigma_{r, d-r+2}(C_{d})$$
where $\sigma_{r,r}(C_{d})$ and $ \sigma_{r, d-r+2}(C_{d})$ are defined in Notation \ref{sigmasq}.
\end{thm}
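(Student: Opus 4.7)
The plan is to exploit the (essentially unique) $0$-dimensional scheme of degree $r$ whose span contains $T$, and analyze whether it is reduced. Set $b := d - r + 2$; since $T$ has symmetric border rank $r$ we have $\sr(T) \geq r$. By Remark~\ref{brkremark3} there exists a $0$-dimensional scheme $Z \subset C_d$ of degree $r$ with $T \in \langle Z \rangle$. For $r$ in our range, such $Z$ is unique (at least when $2r \leq d+1$): if $Z_1 \neq Z_2$ both worked, then $\deg(Z_1 \cup Z_2) \leq 2r \leq d+1$, so $Z_1 \cup Z_2$ imposes independent conditions on $\mathcal O_{\PP d}(1)$, and the dimension formula forces $\langle Z_1 \rangle \cap \langle Z_2 \rangle = \langle Z_1 \cap Z_2 \rangle$ to have dimension $\leq r-2$, exhibiting $T$ in the span of fewer than $r$ distinct points, contradicting $\underline{\sr}(t) = r$.

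If $Z$ is reduced, then $T \in \sigma_r^0(C_d)$, so $\sr(T) = r$ and $T \in \sigma_{r,r}(C_d)$. Assume henceforth that $Z$ is non-reduced; I claim $\sr(T) = b$. \emph{Lower bound:} suppose for contradiction $\sr(T) = s \leq b - 1 = d - r + 1$, so $T \in \langle Z' \rangle$ for a reduced $Z' = \{Q_1,\ldots,Q_s\} \subset C_d$. With $I := \mathrm{supp}(Z) \cap \mathrm{supp}(Z')$ we have $\deg(Z \cup Z') = r + s - |I| \leq d+1$, so $Z \cup Z'$ imposes independent conditions and
$$\dim\bigl(\langle Z \rangle \cap \langle Z' \rangle\bigr) = (r-1)+(s-1)-\bigl(r+s-|I|-1\bigr) = |I|-1.$$
Since $\langle I \rangle \subseteq \langle Z \rangle \cap \langle Z' \rangle$ also has dimension $|I|-1$, the intersection equals $\langle I \rangle$, so $T \in \langle I \rangle$. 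But $Z$ non-reduced gives $|I| \leq |\mathrm{supp}(Z)| < r$, producing a decomposition of $T$ into fewer than $r$ distinct points of $C_d$, contradicting $\underline{\sr}(t) = r$.

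\emph{Upper bound} $\sr(T) \leq b$: By Lemma~\ref{LemmaAB} combined with the explicit coordinate computation preceding Theorem~\ref{curve}, writing $T = (a_0,\ldots,a_d)_{\Sigma^*}$, the condition $T \in \langle D_\psi \rangle$ for the degree-$b$ divisor associated to $\psi = \sum_j u_j\, t_0^{b-j}t_1^j$ translates to $\sum_{j=0}^b u_j\, a_{i+j} = 0$ for $i = 0,\ldots,r-2$, i.e.\ $(u_0,\ldots,u_b)$ lies in the kernel $\Lambda$ of the $(r-1)\times(b+1)$ catalecticant $M_{r-2,b}(T)$. Since on $\PP 1$ every basepoint-free linear system admits a reduced generic member, it suffices to show $\Lambda$ is basepoint-free: a reduced $\psi \in \Lambda$ then yields $b$ distinct points of $C_d$ whose span contains $T$, so $\sr(T) \leq b$.

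The main obstacle is this basepoint-freeness. Note that $\Lambda$ already contains every divisor $Z + E$ with $E$ effective of degree $b-r$, so the base locus is contained in $\mathrm{supp}(Z)$. For each $P \in \mathrm{supp}(Z)$ one must exhibit some $\psi \in \Lambda$ with $\psi(P) \neq 0$; I would prove this by a rank argument. If $P$ were a base point, the evaluation row at $P$ would lie in the row-span of $M_{r-2,b}(T)$, and a direct linear-algebraic manipulation passing between the catalecticants $M_{r-2,b}$ and $M_{r-1,d-r+1}$ would force the vanishing of all $r\times r$ minors of $M_{r-1,d-r+1}(T)$, i.e.\ $T \in \sigma_{r-1}(C_d)$ --- contradicting $\underline{\sr}(t) = r$. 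With basepoint-freeness established, generic reducedness on $\PP 1$ closes the argument and yields $\sr(T) = b = d - r + 2$.
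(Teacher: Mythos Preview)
Your argument is correct and takes a genuinely different route from the paper's on both halves.

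\emph{Lower bound.} The paper builds the space $\langle P_1,\ldots,P_{d-r+1},\,Z-P\rangle$ and extracts a contradiction from a degree/dimension count on $C_d$. Your span--intersection argument (using that $\langle Z\rangle\cap\langle Z'\rangle=\langle Z\cap Z'\rangle$ whenever $Z\cup Z'$ imposes independent conditions, which holds here since $\deg(Z\cup Z')\leq r+s\leq d+1$) is cleaner and more conceptual; it also makes the role of the hypothesis $\underline{\sr}(T)=r$ completely transparent.

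\emph{Upper bound.} The paper works geometrically with the linear series cut on $C_d$ by hyperplanes through $\Gamma=\langle Z-2P,\,T\rangle$, arguing that its fixed part cannot acquire a double point. You work instead with the apolar linear system $\Lambda=\ker M_{r-2,b}(T)$ and must prove basepoint-freeness. Your rank sketch is correct but deserves to be written out explicitly: placing a putative base point at $P=[1{:}0]$, the containment of the evaluation row $(1,0,\ldots,0)$ in the row space of $M_{r-2,b}(T)$ says exactly that the submatrix obtained by deleting the first column has a nontrivial left kernel, hence rank at most $r-2$; but that submatrix coincides, after the index shift $i\mapsto i+1$, with the last $r-1$ rows of $M_{r-1,\,b-1}(T)$, forcing $\mathrm{rk}\,M_{r-1,\,b-1}(T)\leq r-1$ and therefore $T\in\sigma_{r-1}(C_d)$, the desired contradiction. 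A slicker alternative you might prefer: for a binary form of border rank $r$ the apolar ideal is a complete intersection $T^\perp=(f,g)$ with $\deg f=r$, $\deg g=b$ and $\gcd(f,g)=1$; since $(T^\perp)_b$ is spanned by $f\cdot K[t_0,t_1]_{b-r}$ together with $g$, a base point would be a common root of $f$ and $g$.

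Two minor remarks. The uniqueness of $Z$ is never used downstream and indeed fails in the boundary case $d$ even, $r=(d+2)/2$ (where anyway $b=r$), so you may simply drop that paragraph. And in it, $Z_1\cap Z_2$ need not be reduced, so ``span of fewer than $r$ distinct points'' should read ``span of a scheme of degree $<r$''.
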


\begin{proof}
\medskip
Of course, for all $t\in S^{d}V$, if $\sr (t)=r$, with $r \leq
\lceil{d+1\over 2}\rceil$, we have $T\in \sigma_r(C_d)
\setminus\sigma_{r-1}(C_d)$. Thus we have to consider the case $\sr
(t) > \lceil{d+1\over 2}\rceil$, which can happen only if $T \in \sigma_r(C_d) \setminus  \sigma_{r-1}(C_d)$ and $\sr
(t) >r$, i.e. $T\notin \sigma_r^0(C_d)$.

If a point in $K[t_0,t_1]_d^*$ represents a tensor $t$ with $\sr
(t)>\lceil{d+1\over 2}\rceil$, then we want to show that  $\sr (t) =
d-r+2$, where $r$ is the minimum integer such that $T\in
\sigma_r(C_d)$, $r\leq  \lceil{d+1\over 2}\rceil$.

First let us consider the case $r=2$. Let $T \in
\sigma_2(C_d)\setminus C_d$. If $\sr (t) >2$, then $T$ lies on a
line $t_P$, tangent to $C_d$ at a point $P$ (this is because $T$ has
to lie on a $\PP 1$ which is the image of a non-reduced form of
degree 2: $p_0=l^2$ with $l\in K[x_{0},x_{1}]_{1}$, otherwise $\sr
(t)=2$). We want to show that $\sr (t)=d$. If $\sr (t)= r < d$,
there would exist distinct points $P_1,\ldots, P_{d-1}\in C_d$, such
that $T\in <P_1,\ldots,P_{d-1}>$; in this case the hyperplane $H
=<P_1,\ldots ,P_{d-1},P>$ would be such that $t_P \subset H$, but
this is a contradiction, since $H \cap C_d = 2P+P_1+\cdots+P_{d-1}$
has degree $d+1$.

Notice that  $\sr (t) = d$ is possible, since obviously  there is a
$(d-1)$-space (i.e. a hyperplane) through $T$ cutting $d$ distinct
points on $C_d$ (any generic hyperplane through $T$ will do). This
also shows that $d$ is the maximum possible rank.

Now let us generalize the procedure above; let $T \in
\sigma_r(C_d)\setminus\sigma_{r-1}(C_d)$,  $r \leq \lceil{d+1\over
2}\rceil$; we want to prove that if $\sr (t) \neq r$, then $\sr (t)
= d-r+2$. Since $\sr (t)>r$, we know that $T$ must lie on a $\PP
{r-1}$ which cuts a non-reduced divisor $Z\in C_d$ with $\deg
(Z)=r$; therefore there is a point $P\in C_d$ such that $2P \in Z$.
If we had $\sr (t)\leq d-r+1$, then $T$ would be on a $\PP {d-r}$
which cuts $C_d$ in distinct points $P_1,\ldots,P_{d-r+1}$; if that
were true the space $<P_1,\ldots,P_{d-r+1}, Z-P>$ would be $(d-1 -
\deg (Z-2P)\cap \{P_1,\ldots,P_{d-r+1}\})$-dimensional and cut
$P_1+\cdots+P_{d-r+1} + Z - (Z-2P)\cap \{P_1,\ldots,P_{d-r+1}\}$ on $C_d$,
which is impossible.

So we got $\sr (t) \geq d-r+2$; now we have to show that the rank is
 actually $d-r+2$. Let's consider the divisor $Z-2P$ on $C_d$; we
have $\deg (Z-2P) = r-2$, and the space $\Gamma = <Z-2P,T>$ which is
$(r-2)$-dimensional since $<Z-2P>$ does not contain $T$ (otherwise
$T\in \sigma _{r-2}(C_d)$). We will be finished if we show that the
generic divisor of the linear series cut on $C_d$ by the hyperplanes
containing $\Gamma $ is reduced.

If it is not, there should be a fixed non-reduced part of the
series, i.e. there should exist at least a fixed divisor of type
$2Q$. If this is the case, each hyperplane through $\Gamma$ would
contain $2Q$, hence $2Q \subset \Gamma$, which is impossible, since
we would have $\deg (\Gamma \cap C_d) = r$, while $\dim \Gamma =
r-2$.

Thus $\sr (t) =  d-r+2$, as required.
\end{proof}

\begin{rem}\rm (Rank for monomials) In the proof above we have used the fact that (see Proposition 11) if $t$ is a symmetric
tensor such that $T\in \sigma_r(C_d)\setminus \sigma_{r-1}(C_d)$,
and $T\notin \sigma_r^0(C_d)$, then there exists a non reduced
$0$-dimensional scheme $Z\subset \PP d$, which is a divisor of
degree $r$ on $C_d$, such that $T\in <Z>$. Let $Z= m_1P_1+\dots
+m_sP_s$, with $P_1,\dots, P_s$ distinct points on the curve,
$m_1+\dots +m_s=r$ and $m_i\geq 2$ for at least one value of $i$.
Then $t^*$ can be written as
$$ t^*= l_1^{d-m_1+1}f_1+\dots + l_s^{d-m_s+1}f_s$$
where $l_1, \dots, l_s$ are homogeneous linear forms in two
variables and each $f_i$ is a homogeneous form of degree $m_i-1$ for
$i=1,\dots,s$.

In the theorem above it is implicitly proved that each form of this
type has symmetric rank $d-r+2$.
\medskip
In particular, every monomial of type $x^{d-s}y^{s}$ is such that
$$\sr (x^{d-s}y^{s}) = \max \{d-s+1 , s+1\}.$$
\end{rem}

\begin{nota}\label{tau} For all smooth projective varieties
$X,Y \subset \PP d$, we denote with $\tau(X)$  the $tangential$
$variety$ to $X$, i.e. the closure of the union of all its
projective embedded tangent spaces at its points, and with $J(X,Y)$,
the $join$ of $X$ and $Y$, i.e. the closure of the union of all the
lines $<x,y>$, for $x \in X$ and $y\in Y$.
\end{nota}

From the proof of Theorem \ref{curve}, we can also deduce the following result
which describes the strata of high rank on each $\sigma _r (C_d)$:

\begin{corol}\label{strata} Let $C_d \subset \PP d$, $d>2$; then we have:
\begin{itemize}
\item $\sigma_{2,d}(C_d) = \tau (C_d) \setminus C_d$;
\item For all $r$, with\quad  $3\leq r < {\frac{d+2}{2}}: \quad  \qquad
\sigma_{r,d-r+2}(C_{d})= J(\tau (C_d),\sigma_{r-2}(C_d)) \setminus  \sigma_{r-1}(C_{d})$.
\end{itemize}
\end{corol}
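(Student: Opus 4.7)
The plan is to deduce both parts from the following characterization implicit in the proof of Theorem \ref{curve}: for $T\in\sigma_r(C_d)\setminus\sigma_{r-1}(C_d)$ with $r\leq\lceil(d+1)/2\rceil$, one has $\sr(T)=d-r+2$ if and only if $T$ lies in the span $\langle Z\rangle$ of some non-reduced degree-$r$ divisor $Z=2P+Y$ on $C_d$, with $P\in C_d$ and $\deg Y=r-2$. The ``only if'' is exactly what is shown in the proof of Theorem \ref{curve}; the ``if'' direction will be the main obstacle and rests on the uniqueness of the apolar degree-$r$ divisor $Z_T$ for such a $T$, a classical consequence of the catalecticant $M_{d-r,r}(T)$ having rank exactly $r$.

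For Part 1, with $r=2$: the non-reduced degree-$2$ divisors are exactly those of the form $2P$, whose spans are the tangent lines to $C_d$. Taking the union over $P$ gives $\tau(C_d)$, yielding $\sigma_{2,d}(C_d)=\tau(C_d)\setminus C_d$. For Part 2 ($r\geq 3$), the span $\langle 2P+Y\rangle$ equals the projective join $\langle 2P\rangle+\langle Y\rangle$, with $\langle 2P\rangle\subset\tau(C_d)$ and $\langle Y\rangle\subset\sigma_{r-2}(C_d)$; for $r\geq 3$, a point $T$ outside $\sigma_{r-1}$ lies in neither factor, so it sits on a line joining the two, proving $\sigma_{r,d-r+2}(C_d)\subseteq J(\tau(C_d),\sigma_{r-2}(C_d))\setminus\sigma_{r-1}(C_d)$. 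Conversely, given $T\in J(\tau(C_d),\sigma_{r-2}(C_d))\setminus\sigma_{r-1}(C_d)$, write $T$ on a line $\langle x,y\rangle$ with $x\in\tau(C_d)$ and $y\in\sigma_{r-2}(C_d)$; the hypothesis $T\notin\sigma_{r-1}$ forces $x\notin C_d$ (else $T\in J(C_d,\sigma_{r-2}(C_d))=\sigma_{r-1}(C_d)$), so $x\in\langle 2P\rangle\setminus\{P\}$ for some $P$, and by Remark \ref{brkremark3} $y\in\langle Y\rangle$ for some $Y$ of degree $r-2$. Hence $T\in\langle 2P+Y\rangle$ sits in the span of a non-reduced degree-$r$ divisor, so $T\in\sigma_r(C_d)\setminus\sigma_{r-1}(C_d)$ with $\sr(T)\in\{r,d-r+2\}$ by Theorem \ref{curve}.

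The main obstacle is excluding $\sr(T)=r$. I would do this via projection from $\langle Y\rangle$: when $P\notin\mathrm{supp}(Y)$, the restriction of this projection to $C_d$ is classically identified with $\nu_{d-r+2}$ and maps $C_d$ onto a rational normal curve $C_{d-r+2}\subset\PP{d-r+2}$, taking the tangent line $\langle 2P\rangle$ to the tangent line of $C_{d-r+2}$ at $P'=\nu_{d-r+2}(P)$. Since $T\notin\sigma_{r-2}(C_d)\supseteq\langle Y\rangle$, $T$ has a well-defined image $T'$ on this tangent line, and $T\notin\sigma_{r-1}(C_d)$ forces $T'\notin C_{d-r+2}$. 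Part 1 applied to $C_{d-r+2}$ then gives $C_{d-r+2}$-rank of $T'$ equal to $d-r+2$. If instead $\sr(T)=r$, projecting a decomposition $T=\sum_{i=1}^{r}\lambda_i\nu_d(P_i)$ with distinct $P_i$ exhibits $T'$ with $C_{d-r+2}$-rank at most $r<d-r+2$ (the strict inequality using $r<(d+2)/2$), a contradiction. The edge cases where $P\in\mathrm{supp}(Y)$ are handled either by choosing a different decomposition $Z=2P+W$ with $P\notin\mathrm{supp}(W)$, or---when this is impossible, i.e., when $Z$ has multiplicity $\geq 3$ at $P$---by noting that $\langle 2P\rangle\subseteq\langle Y\rangle$ forces the line $\langle x,y\rangle$ into $\langle Y\rangle\subseteq\sigma_{r-2}(C_d)$, violating $T\notin\sigma_{r-1}(C_d)$.
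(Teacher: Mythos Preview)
The paper does not give a separate proof; the corollary is stated as a direct consequence of the proof of Theorem~\ref{curve}. Your plan---extract the characterization that $T\in\sigma_r(C_d)\setminus\sigma_{r-1}(C_d)$ has $\sr(T)=d-r+2$ if and only if the (unique) degree-$r$ apolar scheme $Z_T$ is non-reduced, then translate into the join description---is the intended route, and the uniqueness of $Z_T$ (from $\mathrm{rk}\,M_{d-r,r}(T)=r$) that you invoke is the right tool.

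There is, however, a genuine gap in your reverse inclusion: you assume every $T\in J(\tau(C_d),\sigma_{r-2}(C_d))$ lies on an \emph{actual} line $\langle x,y\rangle$ with $x\in\tau(C_d)$ and $y\in\sigma_{r-2}(C_d)$, but the join is only the closure of the union of such lines. For $r=3$, take $T\in\langle 3P\rangle\setminus\langle 2P\rangle$ (so $T\notin\sigma_2(C_d)$ once $d\geq 5$, which the hypothesis $r<(d+2)/2$ guarantees). Then $T\in J(\tau(C_d),C_d)$ as a limit of points of $\langle 2P+Q\rangle$ with $Q\to P$, yet no join line through $T$ exists: any such line would place $T$ in some $\langle 2Q+R\rangle$, and uniqueness of $Z_T=3P$ forces $Q=R=P$, whence $T\in\langle 2P\rangle$, a contradiction. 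Your projection argument, which needs the explicit data $(P,Y)$ coming from a join line, therefore misses these points. The fix is to deploy the uniqueness you already identified via a closure argument: the locus $A=\{T\in\sigma_r\setminus\sigma_{r-1}:Z_T\text{ non-reduced}\}$ is closed in $\sigma_r\setminus\sigma_{r-1}$ (it is the discriminant condition on the one-dimensional kernel of $M_{d-r,r}(T)$), while $J(\tau,\sigma_{r-2})\setminus\sigma_{r-1}$ is irreducible with generic point in $A$ by your join-line computation, hence is contained in $A$. A symmetric limit step is also needed for the forward inclusion when $Z_T$ has no point of multiplicity exactly $2$, since then no splitting $Z_T=2P+Y$ with $P\notin\mathrm{supp}(Y)$ is available and $\langle 2P\rangle+\langle Y\rangle\subsetneq\langle Z_T\rangle$; your assertion that these spans coincide tacitly assumes $P\notin\mathrm{supp}(Y)$.
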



\subsection{A result on elliptic normal curves.}

We can use the same kind of construction we used for rational normal curves to prove the following
result on elliptic normal curves.

\begin{nota} \rm{ If $\Gamma_{d+1}\subset \PP d$, with $d\geq 3$, is an elliptic normal curve, and $T\in \PP d$,
we say that $T$ has rank $r$ with respect to $\Gamma_{d+1}$ and we
write $r = \mathrm{rk}_{\Gamma_{d+1}}(T)$, if $r$ is the minimum number of
points of $\Gamma_{d+1}$ such that $T$ depends linearly on them.
\\
In the following the $\sigma_{i,j}( \Gamma_{d+1})$'s are defined as
in Notation \ref{sigmasq}, but with respect to $\Gamma_{d+1}$, i.e.
$\sigma_{i,j}(\Gamma_{d+1}) = \{ T \in \PP d | \mathrm{rk}_{\Gamma
_{d+1}}(t) = j, T\in \sigma_i(\Gamma_{d+1})\}$.}
\end{nota}

\begin{thm}\label{ellcurve}
 Let $\Gamma_{d+1}\subset \PP {d}$, $d\geq 3$, be an elliptic normal curve, then:
\begin{itemize}
\item $When\ d= 3,\  we\ have: \qquad \sigma_2 (\Gamma_{4}) \setminus  \Gamma_{4} =
\sigma_{2,2}( \Gamma_{4})\cup \sigma_{2,3}(\Gamma_4);
 (\ here\ \sigma_2 (\Gamma_{4}) = \PP 3).$
\item For $d\geq 4$: $\quad \qquad \sigma_2 (\Gamma_{d+1}) \setminus  \Gamma_{d+1} = \sigma_{2,2}( \Gamma_{d+1})\cup \sigma_{2, d-1}( \Gamma_{d+1}).$
\end{itemize}
\noindent Moreover $\sigma_{2,3}(\Gamma_4) = \{T\in \tau(\Gamma_4)\ | \ two \ tangent\ lines\ to\ \Gamma_4\ meet \ in\ T \}$.
\end{thm}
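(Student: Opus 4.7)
The plan is to run the dichotomy from Proposition \ref{brkremark2}: on an elliptic normal curve $\Gamma=\Gamma_{d+1}$, every length-$2$ subscheme imposes $2$ independent conditions on $|\mathcal{O}(1)|$ (by Riemann--Roch, since $\deg(H-Z)=d-1\ge 1$), so any $T\in\sigma_2(\Gamma)\setminus\Gamma$ lies in $\langle Z\rangle$ for some length-$2$ subscheme $Z\subset\Gamma$. If $Z=P+Q$ is reduced then $T\in\overline{PQ}$ has $\mathrm{rk}_\Gamma(T)=2$; if $Z=2P$ then $T$ lies on the tangent line $t_P$. The whole problem reduces to computing the rank of those $T\in t_P\setminus\Gamma$ that are \emph{not} on any proper chord.

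For the upper bound on such a rank, I would separate $d=3$ and $d\ge 4$. When $d=3$, any plane $H$ through $t_P$ meets $\Gamma_4$ in $2P+Q_1+Q_2$ with $Q_1,Q_2\ne P$; by the linear general position of $\Gamma_4$ the three distinct points $P,Q_1,Q_2$ span $H$, so $T\in H=\langle P,Q_1,Q_2\rangle$ gives $\mathrm{rk}_\Gamma(T)\le 3$. When $d\ge 4$ the incidence $I=\{(\mathbf{P},T)\,:\,T\in\langle\mathbf{P}\rangle\}\subset\mathrm{Sym}^{d-1}(\Gamma)\times\PP{d}$ has $\dim I=(d-1)+(d-2)=2d-3\ge d$ and projects surjectively to $\PP{d}$; a generic reduced fibre element $P_1+\cdots+P_{d-1}$ gives $T\in\langle P_1,\ldots,P_{d-1}\rangle$, hence $\mathrm{rk}_\Gamma(T)\le d-1$.

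For the lower bound I would use Riemann--Roch on $\Gamma$. Assume $T\in t_P$ and $T\in\langle P_1,\ldots,P_k\rangle$ for distinct $P_i$, and put $L:=\langle t_P,P_1,\ldots,P_k\rangle$; since $T\in t_P\cap\langle P_1,\ldots,P_k\rangle$ the Grassmann formula forces $\dim L\le k$. Meanwhile $L$ contains the subscheme $D=2P+P_1+\cdots+P_k$ of degree $k+2$ (the case $P\in\{P_i\}$ gives the same bound after replacing $D$ by $2P+\sum_{i\ne j}P_i$). Whenever $\deg(H-D)=d-k-1\ge 1$, Riemann--Roch on the elliptic curve gives $h^0(H-D)=d-k-1$, whence $\dim\langle D\rangle=d-h^0(H-D)=k+1$. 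The inequality $k+1=\dim\langle D\rangle\le\dim L\le k$ is absurd, so $k\ge d-1$. The very same argument applied at $k=2$ shows that for $d\ge 4$ no point on a tangent can also lie on a chord, so $\sigma_2(\Gamma)\setminus\Gamma=\sigma_{2,2}(\Gamma)\sqcup\sigma_{2,d-1}(\Gamma)$ is a genuine disjoint union; for $d=3$ the borderline $\deg(H-D)=0$ makes the RR step inconclusive at $k=2$ (consistent with the fact that $\tau(\Gamma_4)$ meets many chords), and $\mathrm{rk}_\Gamma(T)\ge 3$ then follows directly from "$T$ not on any chord".

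For the \emph{Moreover} I would project $\Gamma_4$ from $T\in\sigma_{2,3}(\Gamma_4)$: because $T$ is on no chord, $\pi_T\colon\Gamma_4\to\PP 2$ is birational onto a plane quartic $C_T$, so $g_a(C_T)-g_g(C_T)=3-1=2$ gives $\sum_{\mathrm{sing}}\delta_p=2$. A Riemann--Roch computation on $\Gamma_4$ gives $h^0(H-kP)=4-k$ for $1\le k\le 3$, so the only possible vanishing sequences at any $P$ are $(0,1,2,3)$ and (at a Weierstrass point) $(0,1,2,4)$; in both $a_2(P)=2$, hence $t_P$ has simple contact $2$ at $P$ and $\pi_T(P)$ is an ordinary cusp $A_2$ with $\delta=1$. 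A further $\delta=1$ contribution is needed; no node is allowed (no chord through $T$) and no higher cusp can occur on this single branch because $a_2(P)=2$. The only remaining possibility is a second ordinary cusp at $\pi_T(Q)$ for some $Q\ne P$, i.e.\ $T\in t_Q$. Conversely, if $T=t_P\cap t_Q$ with $P\ne Q$, the plane $\langle t_P,t_Q\rangle$ cuts $\Gamma_4$ in exactly $2P+2Q$, and any additional chord through $T$ would add a third singularity with total $\delta\ge 3$ on $C_T$, contradicting $\sum\delta=2$. The main obstacle of the whole proof is precisely this classification step in the Moreover: one needs the plane-quartic genus formula together with the elliptic-RR fact that tangent contact on $\Gamma_4$ is always exactly $2$ in order to rule out ramphoid cusps and tacnodes and so pin down the second tangent line.
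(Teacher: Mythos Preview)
Your lower bound argument (Riemann--Roch plus the Grassmann inequality to force $k\ge d-1$) is essentially the paper's argument, only repackaged; the paper phrases it as ``$2P+P_1+\cdots+P_{d-2}$ is a divisor of degree $d$, hence spans a $\PP{d-1}$, while the span you produced has dimension $d-2$''. That part is fine.

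The genuine gap is in your upper bound for $d\ge 4$. The incidence count $\dim I=2d-3\ge d$ gives (after a little more care) that $\sigma_{d-1}(\Gamma)=\PP d$, but that is a statement about the \emph{generic} $T$: you still need that over \emph{each particular} $T\in t_P$ the fibre in $\mathrm{Sym}^{d-1}(\Gamma)$ contains a reduced element, and nothing in your argument prevents that fibre from sitting entirely inside the big diagonal. The paper avoids this by working directly with the specific $T$: hyperplanes through $t_P$ cut $2P$ plus a complete very ample $g^{d-2}_{d-1}=|H-2P|$, and the divisors $D$ in this series with $T\in\langle D\rangle$ form a codimension-one sublinear system $g^{d-3}_{d-1}$; a codimension-one subseries of a very ample series on a curve has generic reduced member, producing the required $d-1$ distinct points through $T$. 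That step is exactly what your dimension count is missing.

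For the ``Moreover'' you take a genuinely different route from the paper (projection from $T$ and the genus--$\delta$ formula for the plane quartic $C_T$, versus the paper's $g^1_2$ residual to $2P$ and Hurwitz on the resulting $2{:}1$ map $\Gamma_4\to t_P$). Your argument is attractive, but the sentence ``no higher cusp can occur on this single branch because $a_2(P)=2$'' is not correct: what controls the cusp type is $a_3(P)$, not $a_2(P)$. On $\Gamma_4$ there are sixteen hyperosculation points (those $P$ with $4P\sim H$), where the vanishing sequence is $(0,1,2,4)$; for $T$ on such a $t_P$ the local parametrisation of $C_T$ at $\pi_T(P)$ is $(t^2+\alpha t^3+\cdots,\ t^4+\cdots)$, giving a ramphoid cusp $A_4$ with $\delta=2$, not an ordinary cusp. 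So your case analysis breaks down precisely there. (To be fair, the paper's Hurwitz argument is also written for a generic $P$ and does not treat the hyperflex case explicitly; at a hyperflex one of the four ramification points of the $g^1_2$ is $P$ itself and the identification with $t_P$ degenerates, so the ``two tangent lines'' through a generic $T\in t_P$ should really be read as $t_P$ counted with multiplicity two.) If you want to salvage your projection approach, you must separate out the hyperflex tangents and argue there directly.
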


\begin{proof}
First let $d\geq 4$; let $T \in \sigma_2(\Gamma
_{d+1})\setminus\Gamma_{d+1}$. If $\mathrm{rk}_{\Gamma_{d+1}} (T) >2$, it
means that $T$ lies on a line $t_P$, tangent to $\Gamma_{d+1}$ at a
point $P$. We want to show that $\mathrm{rk}_{\Gamma_{d+1}} (T)=d-1$. First
let us check that we cannot have $\mathrm{rk}_{\Gamma_{d+1}} (T)= r < d-1$.
In fact, if that were the case, there would exist points
$P_1,\ldots,P_{d-2}\in \Gamma_{d+1}$, such that $T\in
<P_1,\ldots,P_{d-2}>$; in this case the space
$<P_1,\ldots,P_{d-2},P>$ would be $(d-2)$-dimensional, and such that
$<P_1,\ldots ,P_{d-2},2P> = <P_1,\ldots,P_{d-2},P>$, since $T$ is on
$<P_1,\ldots , P_{d-2}>$, so the line $<2P> = t_P$ is in
$<P_1,\ldots ,P_{d-2},P>$ already. But this is a contradiction,
since $<P_1,\ldots ,P_{d-2},2P>$ has to be $(d-1)$-dimensional (on
$\Gamma_{d+1}$ every divisor of degree at most $d$ imposes independent
conditions to hyperplanes).

Now we want to check that $\mathrm{rk}_{\Gamma_{d+1}} (T) \leq d-1$.  We
have to show that there exist $d-1$ distinct points
$P_1,\ldots,P_{d-1}$ on $\Gamma_{d+1}$, such that $T \in
<P_1,\ldots,P_{d-1}>$.  Consider the hyperplanes in $\PP d$
containing  the line $t_P$; they cut a $g^{d-2}_{d+1}$ on
$\Gamma_{d+1}$, which is made of the fixed divisor $2P$, plus a
complete linear series $g^{d-2}_{d-1}$, which is of course very
ample; among the divisors of this linear series, the ones which span
a $\PP {d-2}$ containing $T$ form a sub-series $g^{d-3}_{d-1}$,
whose generic element is smooth (this is always true for a subseries
of codimension one of a very ample linear series), hence it is made
of $d-1$ distinct points whose span contains $T$, as required.

Now let $d=3$; obviously $\sigma_2({\Gamma_4}) = \PP 3$; if we have
a point $T\in (\sigma_2({\Gamma_4})\setminus {\Gamma_4})$, then $T$
is on a tangent line $t_P$ of the curve. Consider the planes through
$t_P$; they cut a $g^1_2$ on $\Gamma_4$ outside $2P$; each divisor
$D$ of such $g^1_2$ spans a line which meets $t_P$ in a point
($<D>+<2P>$ is a plane in $\PP 3$), so the $g^1_2$ defines a $2:1$
map $\Gamma_4 \rightarrow t_P$ which, by Hurwitz theorem, has four
ramification points. Hence for a generic point of $t_P$ there is a
secant line through it (i.e. it lies on $\sigma_{2,2}(\Gamma_4)$),
but for those special points no such line exists (namely, for the
points in which two tangent lines at $\Gamma_4$ meet), hence those
points have $\mathrm{rk}_{\Gamma_4}=3$ (a generic hyperplane through one
point cuts 4 distinct points on $\Gamma_4$, and three of them span
it).
\end{proof}

\begin{rem}\rm Let $T\in \PP d$ and $C\subset \PP d$ be a smooth
curve not contained in a hyperplane. It is always true that
$\mathrm{rk}_{C}(T)\leq d$. E.g. if $C$ is the rational normal curve
$C=C_d\subset \PP d$, this maximum value of the rank can be attained
by a tensor $T$, and this is precisely the case when $T$ belongs to
$\tau (C_d) \setminus C_d$, see Theorem \ref{curve}). Actually Theorem
\ref{ellcurve} shows that, if $d=3$, then there are tensors of $\PP
3$ whose rank with respect to an elliptic normal curve
$\Gamma_{4}\subset \PP 3$ is precisely $3$. In the very same way,
one can check that the same is true for a rational (non-normal)
quartic curve $C_4\subset \PP 3$. For the case of space curves,
several other examples can be found in \cite{Pi}.
\end{rem}


\subsection{Simplified version of The Sylvester Algorithm}

Theorem 23 allows to get a simplified version of the Sylvester
algorithm (see also \cite{CS}), which  computes only the symmetric
rank of a symmetric tensor, without computing the actual
decomposition.

\begin{alg}\label{SSRA}
\rm{\textbf{The (Sylvester) Symmetric Rank Algorithm}:
\\
\\
\textbf{Input}: The projective class $T$ of a symmetric tensor $t\in S^{d}V$ with $\dim (V)=2$\\
\textbf{Output}: $\sr(t)$.
\begin{enumerate}
\item Initialize $r=0$;
\item\label{oursyl2} Increment $r\leftarrow r+1$;
\item Compute  $M_{d-r,r}(t)$'s $(r+1)\times (r+1)$-minors; if they
are not all equal to zero then go to step \ref{oursyl2}; else, $T\in
\sigma_r(C_d)$ (notice that this happens for $r\leq
\lceil\frac{d+1}{2}\rceil$); go to step \ref{oursyl4}.
\item\label{oursyl4} Choose a solution $(\overline{u}_{0}, \ldots ,
\overline{u}_{d})$ of the system $M_{d-r,r}(t)\cdot
(u_0,\ldots,u_r)^{t} =0$. If the polynomial
$\overline{u}_0t_0^d+\overline{u}_1t_0^{d-1}t_1+\cdots+\overline{u}_rt_1^r$
has distinct roots, then $\sr (t) = r$, i.e. $T\in
\sigma_{r,r}(C_d)$, otherwise $\sr (t) = d-r+2$, i.e. $T\in
\sigma_{r,d-r+2}(C_d)$.
\end{enumerate}}
\end{alg}


\section{Beyond dimension two}\label{Beyond dimension two}

The maps in (\ref{star})  have to be reconsidered when working on $\PP
n$, $n\geq 2$, and with secant varieties to the Veronese variety
$X_{n,d}\subset \PP N$, $N = {d+n \choose n}-1$.  Now a polynomial
in $K[x_0,\ldots ,x_n]_r$ gives a divisor, which is not a
0-dimensional scheme, so the previous construction would not give
$(r-1)$-spaces which are $r$-secant to the Veronese variety.

Actually in this case, when following the construction in
(\ref{star}), we associate to a polynomial $f \in K[x_0,\ldots
,x_n]_r$, the degree $d$ part of the principal ideal $(f)$, i.e. the
vector space $(f)_d \subset K[x_0,\ldots ,x_n]_d$, which is ${d-r+n
\choose n}$-dimensional. Then, working by duality as before, we get
a linear space in $\PP N$ which has dimension ${d+n \choose n} -
{d-r+n \choose n} -1$ and it is the intersection of the hyperplanes
containing the image $\nu_d(F) \subset \nu_d(\PP n)$ of the divisor
$F = \{f=0\}$ where $\nu_{d}$ is the Veronese map defined in
Notation \ref{nud}.

Since the condition for a point in $\PP N$ to belong to such a space
is given by the annihilation of the maximal minors of the
catalecticant matrix $M_{d-r,r}^{(n)}$, this shows that such minors
define in $\PP N$ a variety which is the union of the linear spaces
spanned by the images of the divisors (hypersurfaces in $\PP n$) of
degree $r$ on the Veronese $X_{n,d}$ (see \cite{Gh}).

\bigskip

In order to consider linear spaces which are $r$-secant to
$X_{n,d}$, we will change our approach by considering the
 Hilbert scheme of 0-dimensional subschemes of degree $r$ in $\PP n$,
$Hilb_r(\PP n)$, instead of $K[x_0,\ldots ,x_n]_r$:

\begin{equation}\label{Hilb}\begin{array}{c}
 Hilb_r(\PP n) \stackrel{\phi}{\dashrightarrow}
 \ \G\left({d+n \choose n}-r,K[x_0,\ldots ,x_n]_d\right)\stackrel{\beta}\rightarrow
 \\
 \\
 \noindent  \stackrel{\beta}\rightarrow {\mathbb G}\left({d+n \choose n}-r-1,\PPP (K[x_0,\ldots ,x_n]_d)\right)
 \stackrel{\alpha}\rightarrow
 {\mathbb G}(r-1,\PPP(K[x_0,\ldots,x_n]_d)^*).
 \end{array}\end{equation}
\medskip
The map $\phi$ in (\ref{Hilb}) sends a scheme $Z$, with $\deg(Z)=r$,
to the vector space $(I_Z)_d$;  it is defined in the open set which
parameterizes the schemes $Z$ which impose independent conditions to
forms of degree $d$. The isomorphism $\beta$ is the identification
between the vectorial and projective Grassmannians, while $\alpha$
is given by duality.

As in the case $n=1$, the final image in the above sequence of maps gives
the $(r-1)$-spaces which are $r$-secant to the Veronese variety in
$\PP N \cong \PPP (K[x_0,\ldots,x_n]_d)^*$; moreover such a space
cuts the image of $Z$ on the Veronese.

\begin{nota}\label{PiZ} \rm{ From now on we will always use the notation $\Pi_{Z}$ to indicate
 the projective linear subspace of dimension $r-1$ in $\PPP (S^{d}V)$, with $\dim(V)=n+1$,
 generated by the image of a $0$-dimensional scheme $Z\subset \PP n$ of degree $r$ via Veronese embedding.}
\end{nota}


\subsection{The chordal varieties to Veronese varieties}\label{The chordal varieties to Veronese varieties}

Here we describe $\sigma_{r}(X_{n,d})$ for $r=2$ and $n,d\geq 1$.
More precisely we give a stratification of $\sigma_{r}(X_{n,d})$ in
terms of the symmetric rank of its elements. We will end with an
algorithm that allows to determine if an element belongs to
 $ \sigma_{2}(X_{n,d}) $ and, if this is the case, to compute $\sr
 (t)$.

We premit a remark that will be useful in the sequel.
\par
\medskip
\begin{rem}\label{variables}
\rm  When a form $f\in K[x_0,\ldots ,x_n]$ can be written using less
variables (i.e. $f\in K[l_0,\ldots,l_m]$, for $l_j\in K[x_0,\ldots
,x_n]_1$, $m<n$) then the symmetric rank of the symmetric tensor
associated to $f$ (with respect to $X_{n,d}$) is the same one as the
one with respect to $X_{m,d}$, (e.g. see \cite{LS}, \cite{LT}). In
particular, when a tensor is such that $T \in \sigma_r(X_{n,d})
\subset \PPP (S^{d}V)$, $\dim (V) = n+1$, then, if $r < n+1$, there
is a subspace $W\subset V$ with $\dim (W )= r$ such that $T \in \PPP
(S^{d}W)$; i.e. the form corresponding to $T$ can be written with
respect to $r$ variables.
\end{rem}

\begin{thm}\label{secante2} Any $T\in \sigma_{2}(X_{n,d})\subset \PPP (S^dV)$,
 with $\dim(V)=n+1$, can only have symmetric rank equal to $1$, $2$ or $d$. More precisely:
$$\sigma_2(X_{n,d}) \setminus X_{n,d} = \sigma_{2,2}(X_{n,d})\cup \sigma_{2,d}(X_{n,d}),$$
moreover $\sigma_{2,d}(X_{n,d})=\tau (X_{n,d})\setminus X_{n,d}$.

Here $\sigma_{2,2}(X_{n,d})$ and $ \sigma_{2,d}(X_{n,d})$ are
defined in Notation \ref{sigmasq} and $\tau(X_{n,d})$ is defined in
Notation \ref{tau}.
\end{thm}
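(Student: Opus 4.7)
The plan is to reduce everything to the one-dimensional case (Theorem \ref{curve}) by exploiting Remark \ref{variables}: any tensor in $\sigma_2(X_{n,d})$ depends on at most two linear forms, so it is effectively a binary form.

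First, I would apply Proposition \ref{brkremark2} with $r=2$. The hypothesis is trivially satisfied: every 0-dimensional subscheme of $\PP N$ of degree $2$ spans a projective line, hence imposes two independent conditions on linear forms. Consequently, for any $T \in \sigma_2(X_{n,d}) \setminus X_{n,d}$, there exists a degree-$2$ subscheme $Z \subset X_{n,d}$ with $T \in \langle Z\rangle = \Pi_Z$. If $Z$ is reduced, say $Z = P_1 + P_2$, then $T$ lies on the honest chord through $P_1$ and $P_2$, so $\sr(T) = 2$. Otherwise $Z = 2P$ is a double point, and $\Pi_Z$ is the projective tangent line to $X_{n,d}$ at $P$, giving $T \in \tau(X_{n,d}) \setminus X_{n,d}$.

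Second, I would handle the non-reduced case by reducing to a rational normal curve. Writing $P = [l^d] \in X_{n,d}$, every point of the tangent line at $P$ has the form $[a\, l^d + b\, l^{d-1}m]$ for some fixed linear form $m \in V$ (a tangent direction). Thus $T$ belongs to $\PPP(S^d W)$ where $W = \langle l, m\rangle \subset V$ is two-dimensional. The Veronese curve $C_d := \nu_d(\PPP(W))$ is contained in $X_{n,d}$, and the tangent line to $X_{n,d}$ at $P$ coincides with the tangent line to $C_d$ at $P$. By Remark \ref{variables}, the symmetric rank of $T$ with respect to $X_{n,d}$ equals its symmetric rank with respect to $C_d$. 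Applying Theorem \ref{curve} with $r=2$ to the point $T \in \tau(C_d) \setminus C_d$ yields $\sr(T) = d - r + 2 = d$.

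Finally, assembling the two cases: every $T \in \sigma_2(X_{n,d}) \setminus X_{n,d}$ has symmetric rank $2$ (when $T \in \sigma_2^0$) or $d$ (when $T \in \sigma_2 \setminus \sigma_2^0$), which proves the stratification. For the identification $\sigma_{2,d}(X_{n,d}) = \tau(X_{n,d}) \setminus X_{n,d}$, the inclusion $\supseteq$ follows from Step two (every point of $\tau \setminus X$ has rank $d$), while $\subseteq$ follows because a point of rank $d > 2$ cannot lie in $\sigma_2^0$, so by Proposition \ref{brkremark2} it must lie on the span of a non-reduced degree-$2$ scheme, i.e.\ on a tangent line.

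I expect the main obstacle to be a clean invocation of Remark \ref{variables} in Step two: one must verify that the tangent direction $l^{d-1}m$ genuinely lives in $S^d W$ and that the rank is preserved when the ambient Veronese is restricted to $X_{1,d} \subset X_{n,d}$. Once this reduction is in place, the proof amounts to combining the already-established Proposition \ref{brkremark2} with the $n=1$ result of Theorem \ref{curve}.
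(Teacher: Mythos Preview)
Your proposal is correct and follows essentially the same route as the paper: both observe that any $T \in \sigma_2(X_{n,d})\setminus X_{n,d}$ lies in the span of $\nu_d(L)$ for a line $L \subset \PP n$ (your $\PPP(W)$), then apply Theorem~\ref{curve} to the resulting rational normal curve $C_d$ and transfer the rank back via Remark~\ref{variables}. The only cosmetic difference is that the paper phrases the reduction through the ideal $I_Z = (l_1,\ldots,l_{n-1},q)$ of the degree-$2$ scheme rather than through the explicit form $a\,l^d + b\,l^{d-1}m$; note also that in your Step~2 ``the tangent line to $X_{n,d}$ at $P$'' should read ``the tangent line $\Pi_Z$ in the direction $m$,'' since the full tangent space is $n$-dimensional.
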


\begin{proof}The theorem is actually a quite direct consequence of Remark \ref{variables} and of Theorem \ref{curve},
 but let us describe the geometry in some detail.
 Since $r=2$, every $Z\in Hilb_2(\PP n)$ is the complete intersection of a line and a quadric,
so the structure of $I_Z$ is well known: $I_Z =
(l_1,\ldots,l_{n-1},q)$, where $l_i \in R_1$, linearly independent,
and $q\in R_2 - (l_1,\ldots,l_{n-1})_2$.

If $T\in \sigma_2(X_{n,d})$ we have two possibilities; either
$\sr (T) =2$ (i.e. $T\in \sigma_2^0(X_{n,2})$), or $\sr (T) >
2$ i.e. $T$ lies on a tangent line $\Pi_Z$ to the Veronese, which is
given by the image of a scheme $Z$ of degree 2, via the maps
(\ref{Hilb}). We can view $T$ in the projective linear space $H
\cong \PP d$ in $\PPP (S^dV)$ generated by the rational normal curve
$C_d \subset X_{n,d}$, which is the image of the line $L$ defined by
the ideal $(l_1,\ldots,l_{n-1})$ in $\PP n$ with $l_{1}, \ldots ,
l_{n-1}\in V^{*}$ (i.e. $L\subset \PP n$ is the unique line
containing $Z$); hence we can apply Theorem \ref{curve} in order to
get that rk$_{C_d}(T) = d$.

Hence, by Remark \ref{variables}, we have $\sr(T)=d$.
\end{proof}

\begin{rem} \rm Let us check that the annihilation of the $(3\times 3)$-minors of the
first two catalecticant matrices, $M_{d-1,1}$ and $M_{d-2,2}$
determines $\sigma_2(X_{n,d})$ (actually such minors are the
generators of $I_{\sigma_2(X_{n,d})}$, see \cite{K}).

Following the construction before Theorem 3.3, we can
notice that the linear spaces defined by the forms $l_i\in V^{*}$ in the
ideal $I_Z$, are such that their coefficients are the solutions of a
linear system whose matrix is given by the catalecticant matrix $M_{d-1,1}$ defined in
Definition \ref{catalecticant} (where the
$a_i$'s are the coefficients of the polynomial defined by $t$);
since the space of solutions has dimension $n-1$, we get
$\mathrm{rk}(M_{d-1,1})=2$. When we consider the quadric $q$ in $I_Z$, instead,
the analogous construction gives that its coefficients are the
solutions of a linear systems defined by the catalecticant  matrix $M_{d-2,2}$, and
the space of solutions has to give $q$ and all the quadrics in
$(l_1,\ldots,l_{n-1})_2$, which are ${n \choose 2}+2n-1$, hence  $\mathrm{rk}(
M_{d-2,2}) = {n+2 \choose 2}-({n \choose 2}+2n) = 2$.
\end{rem}

Therefore we can write down an algorithm to test if an element $T
\in \sigma_{2}(X_{n,d})$ has symmetric rank $2$ or $d$.

\begin{alg}\label{sigma2Xndalg}\rm{\textbf{Algorithm for the symmetric rank of an element of
 $\mathbf{\sigma_{2}(X_{n,d})}$}
\\
\\
\textbf{Input}: The projective class $T$ of a symmetric tensor $t\in  S^{d}V$, with $\dim(V)=n+1$;\\
\textbf{Output}: $T \notin \sigma_{2}(X_{n,d})$, or $T\in
\sigma_{2,2}(X_{n,d})$, or $ T \in \sigma_{2,d}(X_{n,d})$, or
$T\in X_{n,d}$.
\begin{enumerate}
\item Consider the homogeneous polynomial associated to $t$ as in (3.1) and rewrite it with the minimum  possible number of variables
(methods are described in \cite{Ca} or \cite{Ol}), if this number
is 1 then $T\in X_{n,d}$; if it is $>2$ then $T\notin
\sigma_{2}(X_{n,d})$, otherwise $T$ can be viewed as a point in
$\PPP(S^dW) \cong \PP d \subset \PPP( S^{d}V)$, and $\dim (W) =2$,
so go to step \ref{second}.
\item\label{second} Apply the
Algorithm \ref{SSRA} to conclude.
\end{enumerate}}
\end{alg}


\subsection{Varieties of secant planes to Veronese varieties}\label{Varieties of secant planes to any Veronese variety}

In this section we give a stratification of
$\sigma_{3}(X_{n,d})\subset \PP {}(S^{d}V)$ with $\dim(V)=n+1$ via
the symmetric rank of its elements.

\begin{lemma}\label{1} Let $Z \subset \PP {n}$, $n\geq 2$,
be a 0-dimensional scheme, with $\deg(Z) \leq 2d+1$. A
necessary and sufficient condition for $Z$ to impose independent
conditions to hypersurfaces of degree $d$ is that no line
$L\subset \PP {n}$ is such that $\deg (Z\cap L) \geq d+2$.
\end{lemma}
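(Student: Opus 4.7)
The plan is to prove both implications, with necessity via a direct restriction argument and sufficiency by induction on $d$ using residuation with respect to a cleverly chosen line.

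\textbf{Necessity.} If some line $L\subset\PP n$ satisfies $\deg(Z\cap L)\geq d+2$, then the restriction map $H^0(\mathcal O_{\PP n}(d))\to H^0(\mathcal O_L(d))$ has image of dimension at most $d+1$. The conditions imposed by $Z\cap L$ on degree-$d$ forms factor through this restriction, so the subscheme $Z\cap L$ (of length $\geq d+2$) cannot impose independent conditions on degree-$d$ forms, and therefore neither can $Z$.

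\textbf{Sufficiency.} I would argue by induction on $d$. The base case $d=1$ is handled directly: $\deg Z\leq 3$ with no three collinear points forces $Z$ to span at most a plane, and $Z$ imposes $\deg Z$ independent conditions on hyperplanes. For the inductive step, if $Z$ is contained in a line $L$, then $\deg Z\leq d+1$ by hypothesis, and since the restriction $H^0(\mathcal O_{\PP n}(d))\twoheadrightarrow H^0(\mathcal O_L(d))\cong K[t_0,t_1]_d$ is surjective, any length-$\deg Z$ subscheme of $L$ trivially imposes $\deg Z$ independent conditions on $K[t_0,t_1]_d$, which lift to $\PP n$. Otherwise, choose a line $L$ maximizing $m:=\deg(L\cap Z)\in[2,d+1]$ and form the residual $Z':=\mathrm{Res}_L(Z)$ via $\mathcal I_{Z'}=(\mathcal I_Z:\mathcal I_L)$; then $\deg Z'\leq 2d-1=2(d-1)+1$, and the standard short exact sequence
\[
0\to\mathcal I_{Z'}(d-1)\to\mathcal I_Z(d)\to\mathcal I_{Z\cap L,L}(d)\to 0
\]
combined with $h^1$-vanishing on the right (since $m\leq d+1$) and on the left (by the inductive hypothesis applied to $(Z',d-1)$) yields $h^1(\mathcal I_Z(d))=0$, which is equivalent to $Z$ imposing independent conditions on degree-$d$ forms.

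\textbf{Main obstacle.} The delicate step is verifying that the inductive hypothesis actually applies to $Z'$, i.e., that no line cuts $Z'$ in length $\geq d+1$. Maximality of $m$ alone is not enough: a line $L^*\neq L$ could meet $Z$ in length exactly $d+1$ and meet $L$ outside of $Z$, leaving $\deg(L^*\cap Z')=d+1$. Overcoming this requires either a more refined choice of $L$ (e.g.\ picking a line through an intersection point of $Z$ with a maximally-meeting line) or a careful case distinction between $m=d+1$ and $m<d+1$; one also needs to verify that residuation with respect to a codimension-$\geq 2$ subscheme preserves the displayed exact sequence, via local analysis at the support points of $L\cap Z$ where $Z$ may be non-reduced.
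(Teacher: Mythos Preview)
Your overall strategy---necessity by restriction, sufficiency by induction on $d$ via the residual sequence with respect to a maximally-secant line---is exactly the paper's approach for the planar case $n=2$. Two points deserve comment.

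First, your ``main obstacle'' dissolves with a one-line degree count, and this is precisely how the paper disposes of it. Since $\mathrm{Res}_L Z\subset Z$, for any line $L^*$ one has $\deg(L^*\cap Z')\le\deg(L^*\cap Z)\le m$. If $\deg(L^*\cap Z')\ge d+1$ then $m=d+1$, whence $\deg Z'=\deg Z-m\le (2d+1)-(d+1)=d<d+1$, a contradiction. So maximality of $m$ together with $\deg Z\le 2d+1$ really is enough; no refined choice of $L$ or case distinction is needed.

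Second, there is a genuine gap in your treatment of $n\ge 3$. The displayed sequence
\[
0\to\mathcal I_{Z'}(d-1)\to\mathcal I_Z(d)\to\mathcal I_{Z\cap L,L}(d)\to 0
\]
is the standard residual sequence with respect to a \emph{hyperplane}, and is simply false when $L$ has codimension $\ge 2$ (already for $Z=\emptyset$ in $\PP 3$ the Euler characteristics do not match). Your remark about ``local analysis'' will not repair this: the problem is global. The paper handles $n\ge 3$ by an additional induction on $n$: one residuates with respect to a hyperplane $H$ meeting $Z$ maximally, applies the inductive hypothesis in $d$ to $\mathrm{Res}_H Z$ and the inductive hypothesis in $n$ to $\mathrm{Tr}_H Z\subset H\cong\PP{n-1}$, and concludes. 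You should replace your line-residuation in higher dimension by this hyperplane step.
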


\begin{proof} The statement was probably classically known, we prove it
here for lack of a precise reference. Notice that $h^0({\mathcal O}_{\PP n}(d))={d+n \choose d} \geq 2d+1$, so what we have to prove is that, for $Z$ as in the statement, if there exists no line $L$ such that $\deg L\cap Z \geq d+2$, then $h^1({\mathcal I}_Z(d))=0$.
Let us work by induction on
$n$ and $d$; if $d=1$ the statement is trivial; so let us suppose
that $d\geq 2$. Let us
consider the case $n=2$ first.  If there is a line $L$ which
intersects $Z$ with multiplicity $\geq d+2$, then trivially $Z$
cannot impose independent condition to curves of degree $d$, since
the fixed line imposes $d+1$ conditions, hence we have already
missed one. So, suppose that there exists no such line, and let $L$
be a line such that $Z\cap L$ is as big as possible (hence $2\leq \deg (Z\cap
L)\leq d+1$). Let the $Trace$ of $Z$ on $L$, $Tr_L Z$, be the
schematic intersection $Z\cap L$ and the $Residue$ of $Z$ with
respect to $L$, $Res_L Z$, be the scheme defined by $(I_Z : I_L)$.
Notice that $\deg (Tr_L Z)+\deg (Res_L Z) = \deg Z$.
We have the following exact sequence of ideal sheaves:
$$ 0 \rightarrow {\mathcal I}_{Res_L Z}(d-1) \rightarrow {\mathcal I}_{Z}(d)
\rightarrow {\mathcal I}_{Tr_L Z}(d) \rightarrow 0.$$ Then no line can
intersect $Res_L Z$ with multiplicity $\geq d+1$, because $\deg(Z)
\leq 2d+1$ and $L$ is a line with maximal intersection with $Z$; so
if $\deg(L'\cap res_L Z)=d+1$, we'd have that also $\deg(L\cap
Z)=d+1$, which is impossible because it would give
$2d+1\geq \deg Z\geq \deg (Tr_L Z)+\deg (L'\cap Res_L Z)=2d+2$. Since $\deg (Res_L Z)\leq 2d+1$,
we have $h^1({\mathcal I}_{Res_L Z}(d-1))=0$, by induction
on $d$. On the other hand, we have $h^1({\mathcal I}_{Tr_L Z}(d)) =
h^1({\mathcal O}_{\PP 1}(d - \deg (Tr_L Z))) =0$, hence  also $h^1({\mathcal
I}_{Z}(d))=0$, i.e. $Z$ imposes independent conditions to curves of
degree $d$.

With the case $n=2$ done, let us finish by induction on $n$.
Consider $n\geq 3$, if there is a line $L$ which intersects $Z$ with
multiplicity $\geq d+2$, we can conclude again that $Z$ does not
impose independent conditions to forms of degree $d$, as in the case
$n=2$. Otherwise, consider a hyperplane $H$, with maximum
multiplicity of intersection with $Z$, and consider the exact
sequence:
$$ 0 \rightarrow {\mathcal I}_{Res_H Z}(d-1) \rightarrow {\mathcal I}_{Z}(d)
\rightarrow {\mathcal I}_{Tr_H Z}(d) \rightarrow 0.$$ We have $h^1({\mathcal
I}_{Res_H Z}(d-1))=0$, by induction on $d$, and $h^1({\mathcal I}_{Tr_H
Z}(d))=0$, by induction on $n$, so we get that $h^1({\mathcal
I}_{Z}(d))=0$ again, and we are done.
\end{proof}

\begin{rem}\label{rem to lemma 1} \rm Notice that if $\deg L\cap Z$ is exactly $d+1+k$, then
the dimension of the space of curves of degree $d$ through $Z$
increases exactly by $k$ with respect to the generic case.
\end{rem}

In the sequel we will need the following definition.

\begin{defi}\label{2jet} A {\it $t$-$jet$} is  a $0$-dimensional scheme
$J\subset \PP n$ of degree $t$ with support at a point $P\in \PP n$
and contained in a line $L$; namely the ideal of $J$ is of type:
$I_P ^t + I_L$, where $L\subset \PP n$ is a line containing $P$.  We
will say that $J_1,\ldots ,J_s$ are generic $t$-jets in $\PP n$ if
for each $i=1,...,s$, we have $I_{J_i}= I_{P_i}^t + I_{L_i}$, the
points $P_1,\ldots ,P_s$ are generic in $\PP n$ and
$\{L_1,\dots,L_s\}$ is generic among all the sets of $s$ lines with
$P_i\in L_i$.
\end{defi}

\begin{thm}\label{sec3Xd} Let $d\geq 3$, $X_{n,d}\subset \PPP (S^dV)$. Then:

$\sigma_{3}(X_{n,3})\setminus
\sigma_{2}(X_{n,3})=\sigma_{3,3}(X_{n,3})\cup\sigma_{3,4}(X_{n,3})\cup
\sigma_{3,5}(X_{n,3})$, while, for $d\geq 4$:

$\sigma_{3}(X_{n,d})\setminus
\sigma_{2}(X_{n,d})=\sigma_{3,3}(X_{n,d})\cup\sigma_{3,d-1}
(X_{n,d})\cup\sigma_{3,d+1}(X_{n,d})\cup \sigma_{3,2d-1}(X_{n,d}).$

The $\sigma_{b,r}( X_{n,d})$'s are defined in Notation
\ref{sigmasq}.
\end{thm}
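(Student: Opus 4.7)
The plan is to apply Proposition~\ref{brkremark2}: by Lemma~\ref{1}, every $0$-dimensional subscheme of degree $3$ in $\PP n$ imposes independent conditions on forms of degree $d\geq 2$ (since $3\leq d+1$), so every $T\in\sigma_3(X_{n,d})\setminus\sigma_3^0(X_{n,d})$ lies in $\Pi_Z$ for some non-reduced degree-$3$ subscheme $Z\subset \PP n$. I would classify such $Z$ into four geometric types: (a) three distinct reduced points, (b) $P+2Q$ with $P\neq Q$, (c) a curvilinear $3$-jet at a single point (local ring $k[\epsilon]/(\epsilon^3)$), and (d) a planar fat point (local ring $k[\epsilon_1,\epsilon_2]/(\epsilon_1,\epsilon_2)^2$); I would further subdivide (b) and (c) according to whether $Z$ sits inside a line of $\PP n$ or spans a $\PP 2$.

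The easy cases go first. In case (d), $Z$ is the first infinitesimal neighbourhood of $P$ in some $2$-plane $\Pi$, so $\Pi_Z$ is the embedded projective tangent plane to $\nu_d(\Pi)\subset X_{n,d}$ at $\nu_d(P)$; each of its points has the form $[v^{d-1}L]$ for a linear form $L$ in $\Pi$, hence lies on a tangent line of $X_{n,d}$ at $\nu_d(P)$, giving $\Pi_Z\subset \tau(X_{n,d})\subset \sigma_2(X_{n,d})$ and contributing nothing to $\sigma_3\setminus\sigma_2$. When $Z$ is contained in a line $L$ (the collinear sub-cases of (a), (b), (c)), the form associated to $T$ involves only two variables, so by Remark~\ref{variables} $\sr(T)$ equals the $C_d$-rank with respect to $C_d=\nu_d(L)$, and Theorem~\ref{curve} with $r=3$ gives $\sr(T)\in\{3,d-1\}$ (for $d=3$ the ``$d-1$'' possibility already lies in $\sigma_2$ and is absent). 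The non-collinear sub-case of (a) obviously gives $\sr(T)=3$.

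For (b) non-collinear ($Z=P+2Q$ spans a $2$-plane), $\Pi_Z$ is the plane spanned by $\nu_d(P)$ and the tangent line $t$ to $X_{n,d}$ at $\nu_d(Q)$ along the $2$-jet; writing a generic $T\in\Pi_Z$ as $\alpha\nu_d(P)+\beta T'$ with $T'\in t\setminus X_{n,d}$, Theorem~\ref{secante2} gives $\sr(T')=d$ and hence $\sr(T)\leq d+1$. For the matching lower bound I would argue by contradiction: assuming $T=\sum_{i=1}^s\lambda_i\nu_d(A_i)$ with $s\leq d$, the scheme $W=\{A_1,\ldots,A_s\}\cup Z$ has degree $s+3\leq 2d+1$ and (because $T$ lies in both $\Pi_Z$ and $\langle\nu_d(A_1),\ldots,\nu_d(A_s)\rangle$) $W$ fails to impose independent conditions on forms of degree $d$; Lemma~\ref{1} then yields a line $\ell$ with $\deg(\ell\cap W)\geq d+2$, forcing at least $d$ of the $A_i$ on $\ell$ (since $\deg(\ell\cap Z)\leq 2$ by non-collinearity). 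A direct geometric check---considering the only two possibilities $\ell=PQ$ or $\ell$ equal to the direction of the $2$-jet at $Q$---shows that $\Pi_Z\cap \langle\nu_d(\ell)\rangle$ is contained in the secant line $\langle\nu_d(P),\nu_d(Q)\rangle$ or in the tangent line $t$ respectively, in either case inside $\sigma_2(X_{n,d})$, contradicting $T\notin\sigma_2(X_{n,d})$.

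The remaining case (c) non-collinear gives the rank-$(2d-1)$ stratum. Here $Z$ is a curvilinear $3$-jet not contained in any line; picking any smooth conic $C$ (in the $2$-plane spanned by $Z$) through $P$ containing $Z$, the image $\nu_d(C)$ is a rational normal curve $C_{2d}$ of degree $2d$ in its linear span $\PP{2d}\subset \PP N$, and $\Pi_Z$ is the osculating $2$-plane of $C_{2d}$ at $\nu_d(P)$ (independent of the choice of $C$, since $Z$ determines the $3$-jet of $\nu_d(C)$ at $\nu_d(P)$). Applying Theorem~\ref{curve} to $C_{2d}$ with $r=3$, a generic $T$ in that osculating plane has $C_{2d}$-rank equal to $2d-1$; since $C_{2d}\subset X_{n,d}$ this yields $\sr(T)\leq 2d-1$. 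For the lower bound $\sr(T)\geq 2d-1$ I would combine apolarity with Lemma~\ref{1}: linear forms on $\PP N$ vanishing on $\langle C_{2d}\rangle$ correspond to degree-$d$ polynomials in the ideal $g\cdot k[x]_{d-2}$ (with $g$ the equation of $C$), so any decomposition $T=\sum\lambda_i\nu_d(A_i)$ satisfies $\sum_i\lambda_ig(A_i)h(A_i)=0$ for every $h\in k[x]_{d-2}$. When the $A_i$ impose independent conditions on $k[x]_{d-2}$ (which holds generically once $d\geq 4$ and $s\leq 2d-2\leq\binom{d}{2}$), this forces each $A_i\in C$; then the $X_{n,d}$-rank equals the $C_{2d}$-rank, and Theorem~\ref{curve} gives $s\geq 2d-1$, the required contradiction. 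The main obstacle is precisely this lower bound: the degenerate configurations (several $A_i$ clustered on the tangent line of $C$ at $P$, or---especially for $d=3$, where $\binom{d}{2}=3<4\leq s$---the evaluation map on $k[x]_{d-2}$ being non-injective) have to be ruled out by a careful geometric case analysis, possibly iterating Lemma~\ref{1} on residual subschemes.
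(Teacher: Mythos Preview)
Your overall strategy matches the paper's: reduce to a classification of degree-$3$ subschemes $Z\subset\PP n$, handle the collinear cases via Theorem~\ref{curve}, dispose of the $2$-fat point as lying in $\sigma_2$, and treat the two genuinely planar cases (b) and (c) separately. Your treatment of cases (a), (d), the collinear sub-cases, and case (b) is essentially the paper's argument; in particular the lower bound in (b) via Lemma~\ref{1} applied to $W=Z\cup\{A_1,\ldots,A_s\}$, followed by the two-line dichotomy ($\ell=PQ$ or $\ell=$ the $2$-jet direction), is exactly what the paper does.

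The gap is the lower bound in case (c). Your apolarity idea---evaluate $gh$ with $h\in k[x]_{d-2}$ on a putative decomposition and force the $A_i$ onto $C$---is attractive, but as you yourself say it only closes when the $A_i$ impose independent conditions on forms of degree $d-2$; handling the degenerate configurations would anyway drag you back to a line-concentration argument via Lemma~\ref{1}, and for $d=3$ (where $\dim k[x]_{d-2}=3<4\leq s$) the generic step never even starts. The paper avoids this detour entirely: it applies Lemma~\ref{1} \emph{directly} to $Z\cup Z'$ with $|Z'|=2d-2$, so that $\deg(Z\cup Z')=2d+1$ sits exactly at the threshold of the lemma. One obtains a line $L$ with $\deg((Z\cup Z')\cap L)\geq d+2$, and since the curvilinear $3$-jet is not collinear, $\deg(Z\cap L)\leq 2$; this yields three sub-cases according to whether $L$ meets $Z$ in degree $0$, $1$, or $2$. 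Each is dispatched by comparing the expected dimension of $\Pi_Z+\Pi_{Z'}$ (using Remark~\ref{rem to lemma 1}) with the upper bound forced by $T\in\Pi_Z\cap\Pi_{Z'}$ together with the collinear points sitting inside $\langle C_d\rangle\cong\PP d$. In other words, the missing ingredient is simply to rerun your own case-(b) machinery with $s=2d-2$ instead of switching to apolarity.
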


\begin{proof} For any scheme $Z\in Hilb_{3}(\PPP (V))$ there exists
 a subspace $U\subset V$ of dimension $3$ such that $Z\subset \PPP (U)$.
 Hence, when we make the construction in (\ref{Hilb}) we get that $\Pi_{Z}$
 is always a $\PP 2$ contained in $\PPP (S^{d}U)$ and $\nu_{d}(\PPP (U))$ is a
 Veronese surface $X_{2,d}\subset \PPP (S^{d}U)\subset \PPP (S^{d}V)$. Therefore,
 by Remark \ref{variables}, it is sufficient to prove the statement for $X_{2,d}\subset \PPP (S^{d}U)$.

First we will consider the case when there is a line $L$ such that
$Z\subset L$. In this case, let $C_d = \nu_d(L)$; we get that $T\in
\sigma_3(C_d)$, hence either $T\in \sigma_{3,3}(C_d)$ (so $T\in
\sigma_{3,3}(X_{2,d})$), or (only when $d\geq 4$) $T\in
\sigma_{3,d-1}(C_d)$, hence  $\sr (T)\leq  d-1$. The symmetric rank
of $T$ is actually $d-1$ by Remark \ref{variables}.

Now we let $Z$ not to be on a line; the scheme $Z\in Hilb_{3}(\PP
n)$ can have support on $3$ , $2$ distinct points or on one point.

If $Supp(Z)$ is the union of $3$ distinct points then clearly
$\Pi_{Z}$, that is the image of $Z$ via (\ref{Hilb}),  intersects
$X_{2,d}$ in $3$ different points and hence any $T\in \Pi_{Z}$ has
symmetric rank precisely $3$, so $T\in \sigma_{3,3}(X_{2,d})$.

If $Supp(Z)=\{P,Q\}$ with $P\neq Q$, then the scheme $Z$ is the
union of a simple point, $Q$, and of a  $2$-jet $J$ (see Definition
\ref{2jet}) at $P$. The structure of $2$-jet on $P$ implies that
there exists a line $L\subset \PP n$ whose intersection with $Z$ is
a $0$-dimensional scheme of degree $2$. Hence
$\Pi_{Z}=<T_{\nu_{d}(P)}(C_{d}), \nu_{d}(Q)>$ where
$T_{\nu_{d}(P)}(C_{d})$ is the projective tangent line at
$\nu_{d}(P)$ to $C_{d}=\nu_{d}(L)$. Since $T\in \Pi_{Z}$, the line
$<T, \nu_{d}(Q)>$ intersects $T_{\nu_{d}(P)}(C_{d})$ in a point $Q'
\in \sigma_{2}(C_{d})$. From Theorem \ref{curve} we know that
$\sr(Q')=d$. We may assume that $T\neq Q'$ because otherwise $T$
should belong to $\sigma_{2}(X_{2,d})$.

We have $Q\notin L$ because $Z$ is not in a line, so $T$ can be
written as a combination of a tensor of symmetric rank $d$ and a
tensor of symmetric rank $1$, hence $\sr(t)\leq d+1$. If $\sr(t)=d$,
then there should exist $Q_{1}, \ldots , Q_{d}\in X_{2,d}$ such that
$T\in<Q_{1}, \ldots , Q_{d}>$; notice that $Q_1,\ldots,Q_d$ are not
all on $C_d$, otherwise $T\in \sigma_2(X_{2,d})$. Let
$P_1,\ldots,P_d$ be the pre-image via $\nu_{d}$ of $Q_{1}, \ldots ,
Q_{d}$; then $P_1,\ldots,P_d$ together with $J$ and $Q$ should not
impose independent conditions to curves of degree $d$, so, by Lemma
\ref{1}, either $P_1,\ldots,P_d,J$ are on $L$, or
$P_1,\ldots,P_d,P,Q$ are on a line $L'$. The first case is not
possible, since $Q_1,\ldots,Q_d$ are not all on $C_d$. In the other
case notice that, by Lemma \ref{1} and the Remark \ref{rem to lemma
1}, we should have that $<Q_{1}, \ldots ,
Q_{d},T_{\nu_{d}(P)}(C_{d}),\nu_{d}(Q)> \cong \PP {d}$, but since
$<Q_{1}, \ldots , Q_{d}>$ and $<T_{\nu_{d}(P)}(C_{d}),\nu_{d}(Q)>$
have $T,\nu_{d}(P)$ and $\nu_{d}(Q)$ in common, they generate a
$(d-1)$-dimensional space, but this is a contradiction. Hence
$\sr(t)=d+1$.

This construction also shows that $T\in \sigma_{3,d+1}(X_{2,d})$,
and that there exists $W\subset V$ with $\dim(W)=2$ and
$l_{1},\ldots , l_{d}\in W^{*}$ and $l_{d+1}\in V^{*}$ such that
$t=l_{1}^{d}+\cdots +l_{d}^{d}+l_{d+1}^{d}$ and $t=[T]$.

If $Supp(Z)$ is only one point $P\in \PP 2$, then $Z$ can only be
one of the following: either $Z$ is $2$-fat point (i.e. $I_Z$ is
$I_P^2$), or there exists a smooth conic containing $Z$.
\\
If $Z$ is a 2-fat point then $\Pi_{Z}$ is the tangent space to
$X_{2,d}$ at $\nu_{d}(P)$, hence if $T\in \Pi_{Z}$, then the line
$<\nu_{d}(P), T>$ turns out to be a tangent line to some rational
normal curve of degree $d$ contained in $X_{2,d}$, hence in this
case $T\in \sigma_{2}(X_{2,d})$.
\\
If there exists a smooth conic $C\subset \PP 2$ containing $Z$,
write $Z=3P$ and consider $C_{2d}=\nu_{d}(C)$, hence $T\in
\sigma_{3}(C_{2d})$, therefore by Theorem \ref{curve}
rk$_{C_{2d}}(T)$, hence $\sr(t)\leq 2d-1$. Suppose that $\sr(t)\leq
2d-2$, hence there exist $P_{1}, \ldots , P_{2d-2}\in \PP 2$
distinct points that are neither on a line nor on a conic containing
$3P$, such that $T\in \Pi_{Z'}$ with $Z'=P_{1}+ \cdots + P_{2d-2}$
and $Z+Z'=3P+P_{1}+ \cdots + P_{2d-2}$ doesn't impose independent
conditions to the planes curves of degree $d$. Now, by Lemma \ref{1}
we get that $3P+ P_{1}+ \cdots + P_{2d-2}$ doesn't impose
independent conditions to the plane curves of degree $d$ if and only
if there exists a line $L\subset \PP 2$ such that $\deg((Z+ Z')\cap
L)\geq d+2$. Observe that $Z'$ cannot have support contained in a
line because otherwise $T\in \sigma_{2}(X_{2,d})$. Moreover $Z+Z'$
cannot have support on a conic $C\subset \PP 2$ because in that case
$T$ would have rank $2d-1$ with respect to $\nu_{d}(C)=C_{2d}$,
while rk$_{C_{2d}}(T)$.
\\
 Assume that $\deg((Z+Z')\cap L)=d+2$ first; we have to check the following cases:
\begin{enumerate}
\item There exist $P_{1}, \ldots , P_{d+2}\in Z'$ on a line $L\subset \PP 2$;
\item There exist $P_{1}, \ldots , P_{d+1}\in Z'$ such that together with $P=Supp(Z)$ they are on the same line $L\subset \PP 2$;
\item There exist $P_{1}, \ldots , P_{d}\in Z'$ such that together with the $2$-jet $2P$ they are on the same line $L\subset \PP 2$.
\end{enumerate}

\begin{itemize}
\item[\textbf{Case 1.}] Let $P_{1}, \ldots , P_{d+2}\in L\subset \PP 2$, then $\nu_{d}(L)=C_{d}\subset \PP d\subset \PP N$ with $N={d+2 \choose 2}-1$. Clearly $T\in \Pi_{Z}\cap \Pi_{Z'}$, then $\dim(\Pi_{Z}+\Pi_{Z'})\leq \dim(\Pi_{Z})+ \dim(\Pi_{Z'})$, moreover $\Pi_{Z'}$ doesn't have dimension $2d-3$ as expected because $\nu_{d}(P_{1}), \ldots , \nu_{d}(P_{d+2})\in C_{d}\subset \PP d$, hence $\dim(\Pi_{Z'})\leq 2d-4$ and $\dim(\Pi_{Z}+ \Pi_{Z'})\leq 2d-2$. But this is not possible because $Z+Z'$ imposes to the plane curves of degree $d$ only one condition less then the expected, hence $\dim (I_{Z+Z'}(d))={d+1 \choose 2}-d+1$ and then $\dim (\Pi_{Z}+\Pi_{Z'})=2d-1$, that is a contradiction.
\item[\textbf{Case 2.}] Let $P_{1}, \ldots , P_{d+1}, P \in L\subset \PP 2$, then
$\nu_{d}(P_{1}), \ldots , \nu_{d}(P_{d+1}), \nu_{d}(P) \in \nu_{d}(L)=C_{d}$.
Now $\Pi_{Z}\cap \Pi_{Z'}\supset \{\nu_{d}(P), T\}$, then again $\dim(\Pi_{Z}+\Pi_{Z'})\leq 2d-2$.
\item[\textbf{Case 3.}] Let $P_{1}, \ldots , P_{d},2P \in L\subset \PP 2$,
as previously $\nu_{d}(P_{1}), \ldots , \nu_{d}(P_{d+1}),
\nu_{d}(2P) \in \nu_{d}(L)=C_{d}$, then now $T_{\nu_{d}(P)}(C_{d})$
is contained in $<C_{d}>\cap \Pi_{Z}$. Since $<\nu_{d}(P_{1}),
\ldots , \nu_{d}(P_{d})>$ is a hyperplane in $<C_{d}>=\PP d$, it
will intersect $T_{\nu_{d}(P)}(C_{d})$ in  a point $Q$ different
form $\nu_{d}(P)$. Again $\dim(\Pi_{Z}\cap \Pi_{Z'})\geq 1$ and then
$\dim(\Pi_{Z}+\Pi_{Z'})\leq 2d-2$.
\end{itemize}
When $\deg((Z+Z')\cap L)=d+k+1$, $k>1$, we can conclude in the same
way, by using Remark 35.
\end{proof}

Now we are almost ready to present an algorithm which allows to
indicate if a projective class of a symmetric tensor in $\PP {{n+d
\choose d}-1}$ belongs to $\sigma_{3}(X_{n,d})$, and in this case to
determine its rank. Before giving the algorithm we need to recall a
result about $\sigma_3(X_{2,3})$:

\begin{rem}\label{Aronhold} \rm The secant variety $\sigma_3(X_{2,3})\subset \PP
9$ is a hypersurface and its defining equation it is the ``Aronhold
(or Clebsch) invariant" (for an explicit expression see e.g.
\cite{Ot}). When $d\geq 4$, instead, $\sigma_3(X_{2,3})$ is defined
(at least scheme theoretically) by the $(4\times 4)$-minors of
$M_{d-2,2}$, see (Landsberg, Ottaviani, 2009).
\end{rem}

Notice also that there is a very direct and well known way of
getting the equations for the secant variety $\sigma_{s}(X_{n,d})$,
which we describe in the next remark. The problem with this method
is that it is computationally very inefficient, and it can be worked
out only in very simple cases.

\begin{rem}\label{equSec}\rm Let $T=[t]=\left[z_{0}, \ldots , z_{{n+d\choose d}}\right]\in \PP {}(S^{d}(V))$,
 where $V$ is an $(n+1)$-dimensional vector space. $T$ is an element of $\sigma_{s}(X_{n,d})$
 if there exist $P_{i}=[x_{0,i}, \ldots , x_{n,i}]\in \PP n= \PP {}(V)$, $i=1, \ldots ,s$,  and
$\lambda_{1}, \ldots , \lambda_{s}\in K$, such that
$t=\lambda_{1}w_{1}+ \cdots + \lambda_{s}w_{s}$, where
$[w_{i}]=\nu_{d}(P_{i})\in \PP {{n+d\choose d}-1}=\PP {}(S^{d}V)$,
$i=1, \ldots , s$ (i.e. $[w_{i}]=[x_{0,i}^{d}, x_{0,i}^{d-1}x_{1},
\ldots , x_{n,i}^{d}]$).

This can be expressed via the following system of equations:
$$\left\{
\begin{array}{l}
z_{0}=\lambda_{1}x_{0,1}^{d}+ \cdots + \lambda_{s}x_{0,s}^{d}\\
z_{1}=\lambda_{1}x_{0,1}^{d-1}x_{1,1}+ \cdots + \lambda_{s}x_{0,s}^{d-1}x_{1,s}\\
\vdots \\
z_{{n+d\choose d}-1}=\lambda_{1}x_{n,1}^{d}+ \cdots +
\lambda_{s}x_{s,s}^{d}
\end{array}
\right. .$$ Now consider the ideal $I_{s,n,d}$ defined by the above
polynomials in the weighted coordinate ring $$R=K\left[x_{0,1},
\ldots , x_{n,1}; \ldots ; x_{0,s}, \ldots , x_{n,s};\lambda_{1},
\ldots , \lambda_{s}; z_{0}, \ldots , z_{{n+d\choose d}-1}\right]$$
where the $z_{i}$'s have degree $d+1$:
 $$I_{s,n,d}=(z_{0}-\lambda_{1}x_{0,1}^{d}+ \cdots + \lambda_{s}x_{0,s}^{d},
z_{1}-\lambda_{1}x_{0,1}^{d-1}x_{1,1}+ \cdots +
\lambda_{s}x_{0,s}^{d-1}x_{1,s}, \ldots , z_{{n+d\choose
d}-1}-\lambda_{1}x_{n,1}^{d}+ \cdots + \lambda_{s}x_{s,s}^{d}).$$
Now eliminate from $I_{s,n,d}$ the variables $\lambda_{i}$'s and
$x_{j,i}$'s, $i=1, \ldots , s$ and $j=0, \ldots , n$. The
elimination ideal $J_{s,n,d}\subset K\left[z_{0}, \ldots ,
z_{{n+d\choose d}-1}\right]$ that we get from this process is an
ideal of $\sigma_{s}(X_{n,d})$.

Obviously  $J_{s,n,d}$ contains all the $(s+1)\times (s+1)$ minors
of the catalecticant matrix of order $r\times (d-r)$ (if they
exist).
\end{rem}

\medskip
\begin{alg}\rm{
\textbf{Algorithm for the symmetric rank of an element of
$\mathbf{\sigma_{3}(X_{n,d})}$}
\\
\\
\textbf{Input}: The projective class $T$ of a symmetric tensor $t\in  S^{d}V$,  with
$\dim(V)=n+1$;
\\
\textbf{Output}: $T\notin \sigma_{3}(X_{n,d})$ or $T\in
\sigma_{2}(X_{n,d})$ or $T\in \sigma_{3,3}(X_{n,d})$ or $T\in
\sigma_{3,d-1}(X_{n,d})$ or $T\in \sigma_{3,d+1}(X_{n,d})$ or $T\in
\sigma_{3,2d-1}$.
\begin{enumerate}
\item Run the first step of Algorithm \ref{sigma2Xndalg}. If only one
variable is needed, then $T\in X_{n,d}$;  if two variables are
needed, then $T\in \sigma_{3}(X_{n,d})$ and use Algorithm
\ref{sigma2Xndalg} to determine $\sr (T)$. If the number of
variables is greater than $3$, then $T\notin \sigma_{3}(X_{n,d})$.
 Otherwise (three variables) consider $t \in S^d(W)$, with $\dim (W) =3$ and
go to next step;
\item\label{d} If $d=3$, evaluate the Aronhold invariant
(see \ref{Aronhold}) on $T$, if it is zero on $T$ then $T\in
\sigma_{3}(X_{2,3})$ and go to step \ref{t}; otherwise $T\notin
\sigma_{3}(X_{2,3})$. If $d\geq 4$, evaluate rk$M_{2,d-2}(T)$; if
rk$M_{2,d-2}(T)\geq 4$, then $T\notin \sigma_{3}(X_{2,d})$ ;
otherwise $T\in \sigma_{3}(X_{2,d})$ and go to step \ref{t}.
 \item\label{t} Consider the space $S\subset  K[x_0,x_1,x_2]_2$ of the
solutions of the system $M_{2,d-2}(T)\cdot (b_{0,0}, \ldots ,
b_{2,2})^t=0$. Choose three generators $F_{1},F_{2},F_{3}$ of $S$.

\item\label{radical} Compute the radical ideal $I$ of the
ideal $(F_{1},F_{2},F_{3})$ (this can be done e.g. with
\cite{CoCoA}). Since $\dim (W)=3$, i.e. 3 variables were needed,
$F_{1},F_{2},F_{3}$ do not have a common linear factor.

\item\label{nonallin} Consider the generators of $I$. If there are
two linear forms among them, then $T\in \sigma_{3,
2d-1}(X_{n,d})$, if there is only one linear form then $T\in
\sigma_{3,d+1}(X_{n,d})$, if there are no linear forms then $T\in
\sigma_{3,3}(X_{n,d})$.
\end{enumerate}}
\end{alg}


\subsection{Secant varieties of $X_{2,3}$}\label{Secant varieties of $X_{2,3}$}

In this section we describe all possible symmetric ranks that can
occur in $\sigma_{s}(X_{2,3})$ for any $s\geq 1$.

\begin{thm} Let $U$ be a $3$-dimensional vector space. The stratification of the cubic
forms of $\PP {}(S^{3}U^{*})$ with respect to symmetric rank is the
following:
\begin{itemize}
\item $X_{2,3}=\{T\in \PPP ( S^{3}U)\; | \; \sr(T)=1\}$; \item
$\sigma_{2}(X_{2,3})\setminus X_{2,3}= \sigma_{2,2}(X_{2,3})\cup
\sigma_{2,3}(X_{2,3})$; \item $\sigma_{3}(X_{2,3})\setminus
\sigma_{2}(X_{2,3})=\sigma_{3,3}(X_{2,3})\cup\sigma_{3,4}(X_{2,3})\cup
\sigma_{3,5}(X_{2,3})$; \item $\PP 9 \setminus
\sigma_{3}(X_{2,3})=\sigma_{4,4}(X_{2,3})$;
\end{itemize}
where $\sigma_{s,m}(X_{2,3})$ is defined as in Notation
\ref{sigmasq}.
\end{thm}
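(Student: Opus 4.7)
The first three items of the stratification follow by specializing earlier results in this paper to $(n,d)=(2,3)$: item (i) is just the definition of the Veronese variety; item (ii) is Theorem \ref{secante2}; and item (iii) is the $d=3$ case of Theorem \ref{sec3Xd}, which explicitly gives the three strata $\sigma_{3,3}\cup\sigma_{3,4}\cup\sigma_{3,5}$. The real content is the fourth item, $\PP{9}\setminus\sigma_3(X_{2,3})=\sigma_{4,4}(X_{2,3})$, i.e., every $T\notin\sigma_3(X_{2,3})$ satisfies $\sr(T)=4$.

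For the lower bound $\sr(T)\geq 4$: by the Alexander--Hirschowitz theorem recalled in the introduction, $\sigma_4(X_{2,3})=\PP{9}$ (its expected dimension is $11>9$, and the pair $(n,d)=(2,3)$ does not appear in the defective list). Hence every $T\notin\sigma_3(X_{2,3})$ has symmetric border rank exactly $4$, so $\sr(T)\geq 4$. For the upper bound the plan is to exhibit, via the Apolarity Lemma, a reduced length-$4$ subscheme $\Gamma=\{P_1,\ldots,P_4\}\subset\PP{2}$ whose ideal (in the apolar ring $K[y_0,y_1,y_2]$) is contained in $f^\perp$, where $f\in K[x_0,x_1,x_2]_3$ is a cubic representing $T$. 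Since $T\notin\sigma_2(X_{2,3})$, the catalecticant $M_{1,2}(T)$ has rank $3$, so $(f^\perp)_2$ is a $3$-dimensional space of quadrics, spanned by some $q_1,q_2,q_3$.

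The crucial geometric input is the classical Aronhold characterization of $\sigma_3(X_{2,3})$: $T\in\sigma_3(X_{2,3})$ if and only if the net of conics $(f^\perp)_2$ has a common base point in $\PP{2}$. Accepting this, the hypothesis $T\notin\sigma_3(X_{2,3})$ implies that $q_1,q_2,q_3$ form a regular sequence and cut out a base-point-free net of conics on $\PP{2}$. In $\PPP((f^\perp)_2)\simeq\PP{2}$, Bezout gives $V(q_a)\cap V(q_b)$ of length $4$ for every pencil $\langle q_a,q_b\rangle$; a Bertini-type argument shows that the pencils with non-reduced base locus form a proper closed subset (for instance, a generic member $V(q)$ of the net is a smooth conic, on which the remaining pencil cuts out a base-point-free degree-$4$ linear system whose members are reduced away from a finite set). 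Pick a pencil with reduced base locus, and let $\Gamma=V(q_a)\cap V(q_b)=\{P_1,\ldots,P_4\}$: by the Koszul resolution of the complete intersection, $I_\Gamma=(q_a,q_b)$, and so $I_\Gamma\subset f^\perp$. The Apolarity Lemma then yields $\sr(T)\leq 4$, and combined with the lower bound we conclude $\sr(T)=4$.

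The main obstacle is the Aronhold characterization itself --- the equivalence between $T\in\sigma_3(X_{2,3})$ and the existence of a common base point for the apolar net $(f^\perp)_2$ --- which is classical (tracing back to Clebsch and Aronhold) but not proved in the paper thus far. Once that fact is granted, the Bertini-type genericity step and the apolarity verification via the Koszul resolution of the complete intersection $(q_a,q_b)$ are routine.
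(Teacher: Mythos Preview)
Your argument is correct in outline but follows a different route from the paper. Both start from the $3$-dimensional apolar space $(f^\perp)_2$ of conics. You then invoke (and correctly flag as the main obstacle) the characterization that this net is base-point-free precisely when $T\notin\sigma_3(X_{2,3})$, and extract four reduced points as a generic complete intersection of two of its members, concluding via the Apolarity Lemma. The paper instead proves only the weaker statement that the generic member of the net is a \emph{smooth} conic $C$: if every member were singular, the net would coincide with $(I_Z)_2$ for some length-$3$ scheme $Z$, forcing $T\in\sigma_3$. Then $T\in\langle\nu_3(C)\rangle=\langle C_6\rangle\cong\PP 6$, and since $T\notin\sigma_3(C_6)$, Theorem~\ref{curve} applied to the rational normal sextic gives $\sr(T)\leq 6-4+2=4$. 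Thus the paper avoids your obstacle entirely: it stays inside its own toolkit (reduction to a rational normal curve and Theorem~\ref{curve}) and needs only one smooth conic in the net rather than base-point-freeness. Your approach, by contrast, is a more direct Apolarity-Lemma argument that does not rely on Theorem~\ref{curve}; the price is the external classical input you identified.
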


\begin{proof} We only need to prove that $\PP 9 \setminus \sigma_{3}(X_{2,3})=\sigma_{4,4}(X_{2,3})$ because $X_{2,3}$
is by definition the set of symmetric tensors of symmetric rank $1$
and the cases of $\sigma_{2}(X_{2,3})$ and $\sigma_{3}(X_{2,3})$ are
consequences of Theorem \ref{secante2} and Theorem \ref{sec3Xd}
respectively.

So now we show that all symmetric tensors in $\PP 9 \setminus
\sigma_{3}(X_{2,3})$ are of symmetric rank $4$. Clearly, since they
do not belong to $\sigma_{3}(X_{2,3})$, they have symmetric rank
$\geq 4$; hence we need to show that their symmetric rank is
actually less or equal than $4$.  Let $T\in \PP 9 \setminus
\sigma_{3}(X_{2,3})$ and consider the system $M_{2,1}\cdot
(b_{0,0},\ldots , b_{2,2})^{T}=0$. The space of solutions of this
system gives a vector space of conics which has dimension $3$;
moreover it is not the degree $2$ part of any ideal representing a
$0$-dimensional scheme of degree $3$ (otherwise we'd have $T\in
\sigma_3(X_{2,3}$), hence the generic solution of that system is a
smooth conic. Therefore in the space of the cubics through $T$,
there is a subspace given by $<C\cdot x_{0}, C\cdot x_{1}, C\cdot
x_{2}>$ where $C$ is indeed a smooth conic given by the previous
system. Hence, if $C_{6}$ is the image of $C$ via the Veronese
embedding $\nu_{3}$, we have that $T\in<C_{6}>$, in particular $T\in
\sigma_{4}(C_{6})\setminus \sigma_{3}(C_{6})$, therefore $\sr(t)\leq
6-4+2=4$.
\end{proof}


\subsection{Secant varieties of $X_{2,4}$}\label{Secant varieties of $X_{2,4}$}

We recall that the $k$-th osculating variety to $X_{n,d}$, denoted
by ${\mathcal O}_{k,n,d}$, is the closure of the union of the
$k$-osculating planes to the Veronese variety $X_{n,d}$, where the
$k$-osculating plane ${\mathcal O}_{k,n,d,P}$ at the point $P\in
X_{n,d}$ is the linear space generated by the $k$-th infinitesimal
neighborhood $(k+1)P$ of $P$ on $X_{n,d}$ (see for example
\cite{BCGI} 2.1, 2.2). Hence for example the first osculating
variety is the tangential variety.

\begin{lemma}\label{lemmaosc} The second osculating variety
${\mathcal O}_{2,2,4}$ of $X_{2,4}$ is contained in $\sigma_{4}(X_{2,4})$.
\end{lemma}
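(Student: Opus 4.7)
The plan is to apply the apolarity characterization of Proposition \ref{brkremark2}: for any $T\in\mathcal{O}_{2,2,4,P}$ I aim to produce a $0$-dimensional scheme $Z\subset\PP 2$ of degree $4$ with $T\in\Pi_Z$. Since $4<d+1=5$, Lemma \ref{1} automatically guarantees that any such $Z$ imposes independent conditions to quartics, so $Z$ lies in the unique irreducible component of the Hilbert scheme of $\PP 2$ parameterizing length-$4$ schemes, and Proposition \ref{brkremark2} then places $T$ in $\sigma_4(X_{2,4})$.

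Using the $GL(V)$-action, I may normalize $P=[x_0^4]$, so that $\mathcal{O}_{2,2,4,P}$ is the projectivization of $x_0^2\cdot K[x_0,x_1,x_2]_2\subset K[x_0,x_1,x_2]_4$; any representative then has the form $f=x_0^2q$ with
$$q=\alpha x_0^2+2\delta x_0x_1+2\epsilon x_0x_2+\beta x_1^2+2\zeta x_1x_2+\gamma x_2^2.$$
Working with the apolar pairing in $K[y_0,y_1,y_2]$ (so that $P$ corresponds to $[1:0:0]$ in the dual $\PP 2$), my first step is to observe the inclusion $I_{3P}=(y_1,y_2)^3\subset f^\perp$: every differential operator in $(y_1,y_2)^3$ demands at least three $x_1$- or $x_2$-factors, while $f$ supplies only two. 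Next I would compute the pure second derivatives
$$\partial_{y_1}^2f=2\beta x_0^2,\qquad\partial_{y_1}\partial_{y_2}f=2\zeta x_0^2,\qquad\partial_{y_2}^2f=2\gamma x_0^2,$$
all lying in the one-dimensional space spanned by $x_0^2$. Hence the linear map $K[y_1,y_2]_2\to K[x_0,x_1,x_2]_2$, $g\mapsto g\circ f$, has image of dimension at most $1$, and its kernel $L:=f^\perp\cap K[y_1,y_2]_2$ has dimension at least $2$.

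Fixing any two-dimensional subspace $L\subset f^\perp\cap K[y_1,y_2]_2$, I form the homogeneous ideal $J:=L+(y_1,y_2)^3\subset K[y_0,y_1,y_2]$. Since the generators of $J$ all lie in $K[y_1,y_2]$, set-theoretically $V(J)\subseteq V(y_1,y_2)=\{P\}$; dehomogenizing at $y_0=1$ gives $\dim_K K[y_1,y_2]/J=6-2=4$, so $Z:=V(J)$ is a length-$4$ scheme of $\PP 2$ supported at $P$. Because the apolar pairing satisfies $(gh)\circ f=g\circ(h\circ f)$, the annihilator $f^\perp$ is an ideal, so the inclusion $J\subset f^\perp$ is preserved in every graded piece; in particular $J_4\subset f^\perp_4$, which by Macaulay inverse systems means exactly $T\in\langle\nu_4(Z)\rangle=\Pi_Z$. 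The argument of the first paragraph then gives $T\in\sigma_4(X_{2,4})$.

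The delicate point is to verify that $V(J)$ really is $0$-dimensional of length $4$ in every case. When $\beta\gamma-\zeta^2\neq 0$ two generic elements of $L$ are coprime in $K[y_1,y_2]$ and already cut out four points in $\PP 2$, but in the degenerate locus $\beta\gamma=\zeta^2$ the pencil $L$ acquires a common linear factor $\ell\in K[y_1,y_2]_1$ and the ideal $(L)=\ell\cdot(y_1,y_2)$ defines a line; only after adding $(y_1,y_2)^3$ does $J$ become $(y_1,y_2)$-primary of colength exactly $4$ at $P$. This local case distinction is the single nontrivial verification in the proof; once it is done, the chain of implications above is routine.
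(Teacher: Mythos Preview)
Your argument is correct and takes a genuinely different route from the paper's own proof. The paper writes a generic $T \in \mathcal{O}_{2,2,4}$ as $l^2\mathcal{C}$ with $l$ linear and $\mathcal{C}$ quadratic, normalizes $l = x$, writes down $M_{2,2}(T)$ explicitly, and observes that its last three rows are nonzero only in the first column, forcing $\mathrm{rk}\, M_{2,2}(T) \leq 4$; it then invokes the external result (Landsberg--Ottaviani) that the $5\times 5$ minors of $M_{2,2}$ cut out $\sigma_4(X_{2,4})$. Your approach instead produces an explicit length-$4$ scheme $Z$ supported at $P$ with $T \in \Pi_Z$ and appeals to Proposition~\ref{brkremark2}. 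What you gain is self-containment: you avoid any dependence on the equations of $\sigma_4(X_{2,4})$, using only machinery already developed in the paper, and you in fact exhibit the degenerate scheme rather than merely certifying membership. What the paper's route gains is brevity --- it is essentially a one-matrix observation.

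Two small imprecisions are worth tightening. First, the implication ``$J_4 \subset f^{\perp}_4 \Rightarrow T \in \Pi_Z$'' really requires $(I_Z)_4 \subset f^{\perp}_4$, and a priori you only know $J \subset I_Z$. You should remark that your Hilbert-function count in fact gives $\dim(R/J)_4 = 4$ (not merely the affine length), while Lemma~\ref{1} gives $\dim(R/I_Z)_4 = 4$, so the inclusion $J_4\subset (I_Z)_4$ is an equality in degree~$4$. Second, the threshold in Lemma~\ref{1} is $d+2=6$, not $d+1=5$; the conclusion is unaffected since $\deg Z = 4 < 6$ either way.
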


\begin{proof} Let $T$ be a generic element of ${\mathcal O}_{2,2,4}\subset \PPP (S^{4}V)$ with $\dim(V)=3$. Hence $T=l^{2}{\mathcal C}$ where $l$ and $\mathcal C$ are a linear and a quadratic  generic forms respectively of $\PPP (S^{4}V)$ regarded as a projectivization of the homogeneous polynomials of degree $4$ in $3$ variables, i.e. $K[x,y,z]_{4}$ (see \cite{BCGI}). We can always assume that $l=x$ and ${\mathcal C}=a_{0,0}x^{2}+a_{0,1}xy+a_{0,2}xz+a_{1,1}y^{2}+a_{1,2}yz+a_{2,2}z^{2}$. The catalecticant matrix $M_{2,2}$ (defined in general in Definition \ref{catalecticant}) for a plane quartic $a_{0000}x^{4}+ a_{0001}x^{3}y+ \cdots + a_{2222}z^{4}$ is the following:
$$M_{2,2}=\left( \begin{array}{cccccc}
a_{0000} & a_{0001} & a_{0002} & a_{0011} & a_{0012} & a_{0022} \\
a_{0001} & a_{0011} & a_{0012} & a_{0111} & a_{0112} & a_{0122}\\
a_{0002} & a_{0012} & a_{0022} & a_{0112} & a_{0122} & a_{0222} \\
a_{0011} & a_{0111} & a_{0112} & a_{1111} & a_{1112} & a_{1122}\\
a_{0012} & a_{0112} & a_{0122} & a_{1112} & a_{1122} & a_{1222}\\
a_{0022} & a_{0122} & a_{0222} & a_{1122} & a_{1222} & a_{2222}
\end{array}\right)$$
hence in the specific case of the quartic above  $l^{2}{\mathcal C}=x^{2}(a_{0,0}x^{2}+a_{0,1}xy+a_{0,2}xz+a_{1,1}y^{2}+a_{1,2}yz+a_{2,2}z^{2})$ it becomes:
$$M_{2,2}(T)=\left( \begin{array}{cccccc}
a_{0000} & a_{0001} & a_{0002} & a_{0011} & a_{0012} & a_{0022} \\
a_{0001} & a_{0011} & a_{0012} & 0 & 0 & 0\\
a_{0002} & a_{0012} & a_{0022} & 0 &0 &0 \\
a_{0011} &0& 0 &0 & 0 & 0\\
a_{0012} & 0 & 0 & 0& 0& 0\\
a_{0022} & 0& 0 & 0 &0& 0
\end{array}\right)
$$ that clearly has rank less or equal than 4.
Since the ideal of $\sigma_{4}(X_{2,4})$ is  generated by the
$(5\times 5)$-minors of $M_{2,2}$, e.g. see (Landsberg, Ottaviani,
2010), we have that ${\mathcal O}_{2,2,4} \subset \sigma_{4}(X_{2,4})$.
\end{proof}

\begin{lemma}\label{quattro allin} If $Z\in Hilb_{4}(\PP 2)$ and $Z $ is
contained in a line, then $r=\sr(T)\leq 4$ for any $T\in \Pi_{Z}$,
where $\Pi_{Z}$ is defined in Notation \ref{PiZ}, and $T$ belongs
either to $\sigma_{2}(X_{2,4})$ or to $\sigma_{3}(X_{2,4})$.
Moreover there exists $W$ of dimension $2$ and $l_{1}, \dots ,
l_{r}\in S^{1}W^{*}$ such that $t=l_{1}^{4}+ \cdots + l_{r}^{4}$
with $r\leq 4$.
\end{lemma}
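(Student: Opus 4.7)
The plan is to reduce everything to the one-dimensional case via Remark \ref{variables} and then invoke Theorem \ref{curve}. Since $Z$ is contained in a line $L \subset \PP 2$, the Veronese image $\nu_4(Z)$ lies on the rational normal quartic $C_4 = \nu_4(L) \subset X_{2,4}$, and $\langle C_4 \rangle$ is a $\PP 4$ inside $\PP{}(S^4V) = \PP{14}$. Therefore $\Pi_Z = \langle \nu_4(Z)\rangle$ is a hyperplane in $\langle C_4\rangle$, and every $T \in \Pi_Z$ already lies in $\langle C_4\rangle$.

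Next I would compute the symmetric rank of $T$ with respect to $C_4$. Since $d=4$, we have $\lceil (d+1)/2\rceil = 3$, so $\sigma_3(C_4) = \langle C_4 \rangle = \PP 4$. Hence $T \in \sigma_3(C_4)$ unconditionally, and Theorem \ref{curve} tells us that the stratification of $\sigma_3(C_4) \setminus \sigma_2(C_4)$ with respect to rank on $C_4$ consists only of $\sigma_{3,3}(C_4)$ (because $d-r+2 = 3$ when $r=3$), while $\sigma_2(C_4) \setminus C_4 = \sigma_{2,2}(C_4) \cup \sigma_{2,4}(C_4)$. This yields the four possibilities $\mathrm{rk}_{C_4}(T) \in \{1,2,3,4\}$.

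Now I would transfer this information to $X_{2,4}$. By Remark \ref{variables}, a tensor whose associated polynomial can be written using only the two variables defining $L$ has the same symmetric rank with respect to $X_{2,4}$ as with respect to $X_{1,4}=C_4$, and it admits a decomposition $t = l_1^4 + \cdots + l_r^4$ with all $l_j$ lying in the $2$-dimensional subspace $W \subset V$ dual to $L$. This gives immediately $\sr(T) \leq 4$ together with the claimed form of the decomposition.

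Finally, for the membership in secant varieties of $X_{2,4}$: if $\mathrm{rk}_{C_4}(T) \leq 2$, then $T \in \sigma_2(C_4) \subset \sigma_2(X_{2,4})$; if $\mathrm{rk}_{C_4}(T) = 4$, then by Theorem \ref{curve} we still have $T \in \sigma_2(C_4) \setminus C_4 \subset \sigma_2(X_{2,4})$ (rank four here comes from lying on a tangent line to $C_4$, hence to $X_{2,4}$); and if $\mathrm{rk}_{C_4}(T) = 3$, then $T \in \sigma_3(C_4) \subset \sigma_3(X_{2,4})$. In every case $T \in \sigma_2(X_{2,4}) \cup \sigma_3(X_{2,4})$, as required. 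No real obstacle is expected; the only point to be careful about is to observe that rank $4$ on $C_4$ still corresponds to border rank $2$, so such tensors do end up in $\sigma_2(X_{2,4})$ rather than outside it.
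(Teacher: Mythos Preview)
Your proposal is correct and follows essentially the same route as the paper: both reduce to the line $L$, observe that $T$ lies in $\langle \nu_4(L)\rangle = \langle C_4\rangle \cong \PP 4 = \sigma_4(C_4)$, use Remark \ref{variables} to identify $\sr(T)$ with $\mathrm{rk}_{C_4}(T)\leq 4$, and then invoke Theorem \ref{curve} to place the rank-$2$ and rank-$4$ cases in $\sigma_2(X_{2,4})$ and the rank-$1$ and rank-$3$ cases in $\sigma_3(X_{2,4})$. Your version is just more explicit about the stratification on $C_4$.
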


\begin{proof} If there exists a $2$-dimensional subspace $W\subset V$ with
$\dim(V)=3$ such that $Supp(Z)\subset \PPP (W)$ then any $T\in
\Pi_{Z}\subset \PPP (S^{4}V)$ belongs to $\sigma_{4}(\nu_{4}(\PPP
(W)))\simeq \PP 4$, therefore $\sr(T)\leq 4$. If $\sr(T)=2,4$ then
$T\in \sigma_{2}(X_{2,4})$, otherwise $T\in \sigma_{3}(X_{2,4})$.
\end{proof}

\begin{lemma}\label{quattro su conica} If $Z\subset Hilb_{4}(\PP 2)$
and there exists a smooth conic $C\subset \PP 2$ such that $Z\subset
C$, then any $T\in \Pi_{Z}$, with $T\notin \sigma_{3}(X_{2,4})$, is
of symmetric rank 4 or 6.
\end{lemma}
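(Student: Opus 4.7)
The plan is to reduce to a rational normal curve of degree $8$ via the Veronese image of $C$ and to apply Theorem \ref{curve}, then to rule out the intermediate rank value $5$ by a case analysis based on Lemma \ref{1}.

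Set $C_8:=\nu_4(C)\subset\langle C_8\rangle\cong\PP 8$, a rational normal curve of degree $8$ contained in $X_{2,4}$. Because $C$ is smooth, every line meets $Z$ in length at most $2$, so Lemma \ref{1} ensures that $Z$ imposes independent conditions on quartics; hence $\Pi_Z\cong\PP 3$, and by Proposition \ref{brkremark2} applied to $C_8$ one has $\Pi_Z\subset\sigma_4(C_8)$. Thus $T\in\sigma_4(C_8)$, and the inclusion $\sigma_3(C_8)\subset\sigma_3(X_{2,4})$ together with the hypothesis forces $T\in\sigma_4(C_8)\setminus\sigma_3(C_8)$. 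Theorem \ref{curve} with $d=8$ and $r=4\leq\lceil 9/2\rceil$ then yields $\mathrm{rk}_{C_8}(T)\in\{4,\,d-r+2\}=\{4,6\}$; in either case $\sr(T)\leq 6$, while the hypothesis $T\notin\sigma_3(X_{2,4})$ gives $\sr(T)\geq 4$. If $\mathrm{rk}_{C_8}(T)=4$, then $T$ is a sum of $4$ points of $C_8\subset X_{2,4}$ and $\sr(T)=4$.

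Suppose from now on that $\mathrm{rk}_{C_8}(T)=6$; it remains to rule out $\sr(T)\in\{4,5\}$. Assume for contradiction that $\sr(T)=r\in\{4,5\}$ with $T\in\Pi_{Z'}$ for a reduced scheme $Z'=P_1+\dots+P_r$ in $\PP 2$ (for simplicity take $Z$ and $Z'$ disjoint; overlaps are handled by parallel estimates). From $T\in\Pi_Z\cap\Pi_{Z'}$ one has $\dim(\Pi_Z+\Pi_{Z'})\leq r+2$, whereas $\deg(Z\sqcup Z')=r+4\leq 9=2\cdot 4+1$ lies in the range of Lemma \ref{1} and independent conditions on quartics would force $\dim\Pi_{Z\sqcup Z'}=r+3$, contradicting the previous bound. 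Hence there is a line $L\subset\PP 2$ with $\deg((Z\sqcup Z')\cap L)\geq 6$; since $\deg(Z\cap L)\leq 2$, at least four of the $P_i$ lie on $L$. If all of them do, then $T\in\langle\nu_4(L)\rangle\cap\langle C_8\rangle=\langle\nu_4(L\cap C)\rangle$, which has dimension at most $1$, so $T\in\sigma_2(X_{2,4})$, contradicting $T\notin\sigma_3(X_{2,4})$.

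The main obstacle is the remaining configuration, possible only for $r=5$: exactly four $P_i$ lie on $L$ while the fifth $P_5$ does not, forcing $\deg(Z\cap L)=2$. The plan is to use Remark \ref{rem to lemma 1} to pin down $\dim\Pi_{Z\sqcup Z'_1}=6$ for $Z'_1=P_1+\dots+P_4$, and to observe that any quartic vanishing on the six points of $(Z\sqcup Z'_1)\cap L$ (counted with multiplicity) must contain $L$ as a factor, so has the form $L\cdot F$ with $F$ a cubic vanishing on the length-$2$ residue $\mathrm{Res}_L(Z)\subset C\setminus L$. From $\nu_4(P_5)=T-T_L\in\Pi_Z+\Pi_{Z'_1}\subset\Pi_{Z\sqcup Z'_1}$ with $T_L\in\Pi_{Z'_1}$, it follows that $P_5$ lies in the base locus of the cubics through $\mathrm{Res}_L(Z)$; hence $P_5\in\mathrm{Supp}(\mathrm{Res}_L(Z))\subset C$. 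Writing $T=T_L+\nu_4(P_5)$ with $T_L\in\langle\nu_4(L\cap C)\rangle$ then expresses $T$ as a combination of at most three points of $C_8$ when $L$ meets $C$ transversally, and a parallel estimate handles the tangent case, in either situation contradicting $\mathrm{rk}_{C_8}(T)=6$. This excludes $\sr(T)=5$, forcing $\sr(T)=6$.
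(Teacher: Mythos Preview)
Your proof is correct and follows the same overall route as the paper: pass to the rational normal curve $C_8=\nu_4(C)$, invoke Theorem \ref{curve} to get $\sr(T)\leq 6$, and then exclude $\sr(T)=5$ via Lemma \ref{1} by producing a line $L$ carrying at least six points of $Z\cup Z'$. The only substantive difference is the endgame in the configuration ``four $P_i$ on $L$, one off''. The paper finishes with a dimension count: by Remark \ref{rem to lemma 1} the span $\Pi_{Z+Z'}$ must be a $\PP 7$, whereas the secant line $\Pi_{Z\cap L}$ and the $3$-plane $\Pi_{Z'\cap L}$ are forced to meet inside $\langle\nu_4(L)\rangle\cong\PP 4$, so together with $T\in\Pi_Z\cap\Pi_{Z'}$ one gets $\dim(\Pi_Z+\Pi_{Z'})\leq 6$, a contradiction. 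You instead identify $(I_{Z\sqcup Z'_1})_4=l\cdot (I_{\mathrm{Res}_L Z})_3$ and use the base locus to force $P_5\in C$, which drops $T$ into $\sigma_3(C_8)$. Both arguments are valid; yours is a bit more constructive, and the explicit identification $\langle\nu_4(L)\rangle\cap\langle C_8\rangle=\langle\nu_4(L\cap C)\rangle$ is a clean ingredient the paper leaves implicit. One small inaccuracy: $\mathrm{Res}_L(Z)$ need not lie in $C\setminus L$ (its support can meet $L\cap C$), but you only use $\mathrm{Res}_L(Z)\subset C$, so this does not affect the argument.
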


\begin{proof} Clearly $T\in \sigma_{4}(\nu_{4}(C))$ and $\nu_{4}(C)$ is
 a rational normal curve of degree $8$, then $\sr(T)\leq 6$.
 If $\sharp \{Supp (Z)\}=4$ then $\sr(T)=4$. Otherwise
$\sr(T)$ cannot be less or equal than $5$ because there would exists
a $0$-dimensional scheme $Z' \subset \PP 2$ made of $5$ distinct
points such that $T\in \Pi_{Z'}$, then $Z+Z'$ should not impose
independent conditions to plane curves of degree $4$. In fact by
Lemma \ref{1} the scheme $Z+Z'$ doesn't impose independent
conditions to the plane quartic if and only if there exists a line
$M\subset \PP 2$ such that $\deg((Z+Z')\cap M)\geq 6$. If
$\deg((Z')\cap M)\geq 5$ then $T\in \sigma_{2}(X_{2,4})$ or $T\in
\sigma_{3}(X_{2,4})$. Hence assume that $\deg((Z+Z')\cap M)\geq 6$
and $\deg((Z')\cap M)<5$. Consider first the case $\deg((Z+Z')\cap
M)= 6$. Then $\deg ((Z')\cap M)=4$ and $\deg ((Z)\cap M)=2$. We have
that $\Pi_{Z+Z'}$ should be a $\PP 7$ but actually it is at most a
$\PP 6$ in fact $\Pi_{(Z+Z')\cap M}=\PP 4$ because $<\nu_{4}(M)>=\PP
4$, moreover $T\in\Pi_{Z}\cap \Pi_{Z'}$ hence $\Pi_{Z+Z'}$ is at
most a $\PP 6$. Analogously if $\deg ((Z+Z')\cap M)=7$ (it cannot be
more) one can see that $\Pi_{Z+Z'}$ should have dimension $6$ but it
must have dimension strictly less than $6$.
\end{proof}

\begin{thm} The $s$-th secant varieties to $X_{2,4}$, up to $s=5$, are described in terms of symmetric ranks as follows:
\begin{itemize}
\item $X_{2,4}=\{T \in S^{4} V\; | \; \sr(T)=1\}$;
\item $\sigma_{2}(X_{2,4})\setminus X_{2,4}= \sigma_{2,2}(X_{2,4}) \cup \sigma_{2,4}(X_{2,4})$;
\item $\sigma_{3}(X_{2,4})\setminus \sigma_{2}(X_{2,4})=\sigma_{3,3}(X_{2,4})\cup\sigma_{3,5}(X_{2,4})\cup \sigma_{3,7}(X_{2,4})$;
\item $\sigma_{4}(X_{2,4})\setminus \sigma_{3}(X_{2,4})=\sigma_{4,4}(X_{2,4})\cup \sigma_{4,6}(X_{2,4})\cup \sigma_{4,7}(X_{2,4})$;
\item $\sigma_{5}(X_{2,4})\setminus \sigma_{4}(X_{2,4})=\sigma_{5,5}(X_{2,4})\cup \sigma_{5,6}(X_{2,4})\cup \sigma_{5,7}(X_{2,4})$.
\end{itemize}
\end{thm}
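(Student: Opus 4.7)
The first three bullets are direct consequences of results proved earlier in the paper. The description of $X_{2,4}$ as the rank-one locus is simply Definition \ref{srk}. For $\sigma_{2}(X_{2,4})\setminus X_{2,4}$ my plan is to invoke Theorem \ref{secante2} with $(n,d)=(2,4)$. For $\sigma_{3}(X_{2,4})\setminus \sigma_{2}(X_{2,4})$ I would apply Theorem \ref{sec3Xd} with $d=4$ and note that the formal ranks $3,\ d-1=3,\ d+1=5,\ 2d-1=7$ collapse to the three values $\{3,5,7\}$.

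For $\sigma_{4}(X_{2,4})\setminus \sigma_{3}(X_{2,4})$, first I would use Proposition \ref{brkremark2}, Remark \ref{brkremark3} and Lemma \ref{1} (whose hypothesis is automatic since $\deg Z=4<d+2=6$) to attach to every $T$ in the stratum a $0$-dimensional scheme $Z\subset \PP{2}$ of degree $4$ with $T\in \Pi_{Z}$. Then I would split on the geometric type of $Z$: if $Z$ lies on a line, Lemma \ref{quattro allin} forces $T\in \sigma_{3}(X_{2,4})$, which is excluded; if $Z$ lies on a smooth conic, Lemma \ref{quattro su conica} yields $\sr(T)\in\{4,6\}$, producing the strata $\sigma_{4,4}$ and $\sigma_{4,6}$. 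The only remaining possibility is that $Z$ is a degenerate scheme contained in no smooth conic (for instance $Z$ of the form $3$-jet plus simple point, or a certain non-curvilinear scheme supported at a single point). In this case Lemma \ref{lemmaosc} will place $T$ on the second osculating variety ${\mathcal O}_{2,2,4}$, and I would prove $\sr(T)=7$ via a residual-scheme argument in the style of Theorem \ref{sec3Xd}: any hypothetical decomposition with fewer than $7$ simple points, united with the supporting scheme $Z$, would violate the independent-conditions bound of Lemma \ref{1} on $\PP{2}$.

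For $\sigma_{5}(X_{2,4})\setminus \sigma_{4}(X_{2,4})$ the key input is the classical defect: $\sigma_{5}(X_{2,4})$ is a hypersurface in $\PP{14}$ cut out (set-theoretically) by $\det M_{2,2}=0$. Hence for $T$ in the stratum the kernel of $M_{2,2}(T)$ corresponds to a nonzero conic $C\subset \PP{2}$ such that $T\in \langle \nu_{4}(C)\rangle$. I would then split into three subcases according as $C$ is smooth, a pair of distinct lines, or a double line. In the smooth case, $\nu_{4}(C)=C_{8}$; since $\sigma_{4}(C_{8})\subseteq \sigma_{4}(X_{2,4})$ but $T\notin \sigma_{4}(X_{2,4})$, Theorem \ref{curve} forces $\sr_{C_{8}}(T)=5$, hence $\sr(T)=5$. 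When $C=L_{1}\cup L_{2}$ is reducible, decomposing $T=T_{1}+T_{2}$ with $T_{i}\in \langle\nu_{4}(L_{i})\rangle$ and applying Theorem \ref{curve} to each of the rational normal quartics $\nu_{4}(L_{i})$, combined with the classical upper bound $\sr(T)\leq 7$ for plane quartics, should produce the strata $\sigma_{5,6}$ and $\sigma_{5,7}$; the double-line case is handled similarly, but only feeds the highest-rank stratum because of extra non-reducedness.

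The hard part will be the sharpness of the lower bounds $\sr(T)\geq 7$ in $\sigma_{4,7}$, and $\sr(T)\geq 6$, $\sr(T)\geq 7$ in $\sigma_{5,6}$ and $\sigma_{5,7}$ respectively: the upper bounds come quickly from the geometry of the supporting scheme $Z$ or the conic $C$, but excluding smaller-rank decompositions requires the same kind of residual-scheme analysis via Lemma \ref{1} that powered the proof of Theorem \ref{sec3Xd}, and each reducible-conic subcase in $\sigma_{5}$ needs its own careful dimension count at the singular point of $C$.
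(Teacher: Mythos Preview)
Your treatment of the first three items and the general setup for $\sigma_4$ (attaching a length-$4$ scheme $Z$ and splitting by the geometry of $Z$) matches the paper. The genuine gap is in your handling of the ``$Z$ not on a smooth conic'' branch of $\sigma_4(X_{2,4})\setminus\sigma_3(X_{2,4})$.

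First, Lemma \ref{lemmaosc} does not say what you use it for: it only asserts ${\mathcal O}_{2,2,4}\subset\sigma_4(X_{2,4})$, and gives no mechanism for placing a given $T\in\Pi_Z$ onto the second osculating variety. In the paper the lemma is used only in the $\sigma_5$ analysis, and in the opposite direction---to show that certain degree-$5$ schemes $Z$ supported at a point have $\Pi_Z\subset{\mathcal O}_{2,2,4}\subset\sigma_4$, hence are \emph{excluded} from $\sigma_5\setminus\sigma_4$.

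Second, and more importantly, it is simply false that the ``no smooth conic'' case yields $\sr(T)=7$ uniformly. Your own example, a $3$-jet plus a simple point (with the $3$-jet on a line $L$ and the simple point off $L$), has $(I_Z)_2=\langle xy,xz\rangle$; the paper shows that here $\Pi_Z=\langle\Pi_{Z\cap L},\nu_4(P)\rangle$ with $\Pi_{Z\cap L}\subset\sigma_3(\nu_4(L))$, so $\sr(T)\le 4$, whence $\sr(T)=4$. Similarly, the non-curvilinear case $I_Z=(x^2,y^2)$ (your ``non-curvilinear scheme supported at a single point'') produces, after normalizing, the monomial $x_0x_1x_2^2$, which has symmetric rank $6$, not $7$. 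The rank-$7$ stratum comes only from the subcase $(I_Z)_2=\langle x^2,xy\rangle$ with $Z$ supported at two or more points; the paper does this finer case split on $(I_Z)_2$ and on the support and computes the catalecticant rank in each instance. Your residual-scheme lower-bound idea via Lemma \ref{1} is the right tool for proving $\sr(T)\ge 7$ in that subcase, but it cannot be applied wholesale to the entire ``no smooth conic'' branch.

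For $\sigma_5$ your route (take the unique apolar conic $C$ in $\ker M_{2,2}(T)$ and split on its type) is a legitimate alternative to the paper's, which instead takes a degree-$5$ scheme $Z$ with $T\in\Pi_Z$ and splits on how $Z$ sits on its (necessarily existing) conic; the two decompositions are essentially dual since $I_Z\supset(q)$ forces $q\in\ker M_{2,2}(T)$. Your smooth-conic subcase is fine. But note that once $C$ is reducible or a double line you will in effect have to reconstruct the paper's case list (types $(4,1)$, $(3,2)$, $(3,3)$, $(4,2)$ for $C=L\cup R$, and several support configurations for $C=2L$), because the upper bounds and the exclusions ``$T\in\sigma_4$'' depend on the length-$5$ scheme $Z$ on $C$, not on $C$ alone; appealing to a ``classical upper bound $\sr(T)\le 7$ for plane quartics'' does not substitute for this.
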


\begin{proof}  By definition of $X_{n,d}$ we have that $X_{2,4}$ is the variety parameterizing symmetric tensors of $S^{4}V$ having
 symmetric rank $1$ and the cases of $\sigma_{2}(X_{2,4})$ and $\sigma_{3}(X_{2,4})$ are consequences of Theorem
 \ref{secante2} and Theorem \ref{sec3Xd} respectively.

Now we study  $\sigma_{4}(X_{2,4})\setminus \sigma_{3}(X_{2,4})$.
Let $Z\in Hilb_{4}(\PP 2)$ and $T\in \Pi_{Z}$ be defined as in
Notation \ref{PiZ}.
\begin{itemize}
\item Let $Z$ be contained in a line $L$; then by  Lemma \ref{quattro allin} we have that $T$ belongs
either to $\sigma_{2}(X_{2,4})$ or to $\sigma_{3}(X_{2,4})$.
\item Let $Z\subset C$, with $C$ a smooth conic. Then by Lemma \ref{quattro su conica},
$T\in \sigma_{4,4}(X_{2,4})$ or $T\in \sigma_{4,6}(X_{2,4})$.
\item If there are no smooth conics containing $Z$ then either there is a line $L$ such
that $\deg(Z\cap L)=3$,  or $I_{Z}$ can be written as $(x^{2}, y^{2})$.
We study separately those two cases.
\begin{enumerate}
\item  In the first case the ideal of $Z$ in degree 2 can be written either as $<x^{2}, xy>$ or $<xy,xz>$.
\\
\\
If $(I_{Z})_{2}=<x^{2}, xy>$ then it can be seen that the catalecticant matrix of $T$ is
$$M_{2,2}(T)=\left( \begin{array}{cccccc}
0 & 0& 0 & 0 & 0 & 0 \\
0 & 0& 0 & 0 & 0 & 0\\
0 & 0& 0 & 0 & 0 & a_{0222} \\
0 & 0& 0 & a_{1111} & a_{1112} & a_{1122}\\
0 & 0& 0 & a_{1112} & a_{1122} & a_{1222}\\
0 & 0& a_{0222} & a_{1122} & a_{1222} & a_{2222}
\end{array}\right).$$
Hence, for a generic such $T$, we have that $T\notin
\sigma_{3}(X_{2,4})$ since the rank of $M_{2,2}(T)$ is $4$, while it
has to be 3 for points in $\sigma_{3}(X_{2,4})$. In this case if $Z$
has support in a point then $I_{Z}$ can be written as $(x^{2},xy,
y^{3})$ and the catalecticant matrix defined in Definition
\ref{catalecticant} evaluated in $T$ turns out to be:
$$M_{2,2}(T)=\left( \begin{array}{cccccc}
0 & 0 & 0 & 0 & 0 & 0\\
0 & 0 & 0 & 0 & 0 & 0\\
0 & 0 & 0 & 0 & 0 & a_{0222}\\
0 & 0 & 0 & 0 & 0 & a_{1122}\\
0 & 0 & 0 & 0 & a_{1122} & a_{1222}\\
0 & 0 & a_{0222} & a_{1122} & a_{1222} &
a_{2222}\end{array}\right)$$ that clearly has rank less or equal
than 3. Hence $T\in \sigma_{3}(X_{2,4})$.
\\
Otherwise $Z$ is either made of two 2-jets or one 2-jet and two
simple points. In both cases denote by $R$ the line $y=0$. We have
$\deg(Z\cap R)=2$. Thus $\Pi_{Z}$ is the sum of the linear space
$\Pi_{Z\cap L} \simeq \PP 2$ and $\Pi_{Z\cap R}\simeq \PP 1$. Hence
$T=Q+Q'$ for suitable $Q\in \Pi_{Z\cap L}$ and $Q'\in \Pi_{Z\cap
R}$. Since $Q\in \sigma_{3}(\nu_{4}(L))$  and $Q'$ is in a tangent
line to $\nu_{4}(R)$ we have that $\sr(T)\leq 7$. Working as in
Lemma \ref{quattro su conica} we can prove that $\sr(T)=7$.
\\
\\
Eventually if $(I_{Z})_{2}$ can be written as $(xy, xz)$ then $Z$ is
made of a subscheme $Z'$ of degree 3 on the line $L$ and a simple
point $P\notin L$. In this case $\sr(T)=4$  since $\Pi_Z =
<\Pi_{Z'},\nu_{4}(P)>$ and any element in $\Pi_{Z'}$ has symmetric
rank $\leq 3$ (since it is on $\sigma_{3}(\nu_{4}(L))$).
\item In the last case we have that $I_{Z}$ can be written as $(x^{2}, y^{2})$.
If we write the catalecticant matrix defined in Definition
\ref{catalecticant} evaluated in $T$ we get the following matrix:
$$M_{2,2}(T)=\left( \begin{array}{cccccc}
0 & 0 & 0 & 0 & 0 & 0\\
0 & 0 & 0 & 0 & 0 & a_{0122}\\
0 & 0 & 0 & 0 & a_{0122} & a_{0222}\\
0 & 0 & 0 & 0 & 0 & 0\\
0 & 0 & a_{0122} & 0 & 0 & a_{1222}\\
0 & a_{0122} & a_{0222} & 0 & a_{1222} &
a_{2222}\end{array}\right).$$ Clearly if $a_{0122}=0$ the rank of
$M_{2,2}(T)$ is three, hence such a $T$ belongs to
$\sigma_{3}(X_{2,4})$, otherwise we can make a change of coordinates
(that corresponds to do a Gauss elimination on $M_{2,2}(T)$) that
allows to write the above matrix as follows:
$$M_{2,2}(T)=\left( \begin{array}{cccccc}
0 & 0 & 0 & 0 & 0 & 0\\
0 & 0 & 0 & 0 & 0 & a_{0122}\\
0 & 0 & 0 & 0 & a_{0122} & 0\\
0 & 0 & 0 & 0 & 0 & 0\\
0 & 0 & a_{0122} & 0 & 0 & 0\\
0 & a_{0122} & 0& 0 & 0& 0\end{array}\right).$$
This matrix is associated to a tensor $t\in S^{4}V$, with $\dim (V)=3$, that can be written
as the polynomial $t(x_{0},x_{1},x_{2})=x_{0}x_{1}x_{2}^{2}$. Now $\sr(t)=6$ (see \cite{LT}, Proposition 11.9).
\end{enumerate}
\end{itemize}

\a We now study $\sigma _5(X_{2,4})\setminus \sigma _4(X_{2,4})$, so
in the following we assume $T\notin \sigma _4(X_{2,4})$, which
implies $\sr(T) \geq 5$. We have to study the cases with
$\deg(Z)=5$, i.e., $Z\in Hilb_5(\PP 2)$. The scheme $Z$ is hence
always contained in a conic, which can be a smooth conic, the union
of 2 lines or a double line. In the last two cases, $Z$ might be
contained in a line; we now distinguish the various cases according
to these possibilities.

\begin{itemize}
\item $Z$ is contained in a line $L$: $\Pi_{Z}\cong \PP 4$ is spanned by the
rational curve $\nu (L)=C_4$, hence $\sr(T) \leq 4$, against assumptions.

\item $Z$ is contained in a smooth conic $C$. Hence $ \Pi_{Z}$ is spanned by the
subscheme $\nu (Z)$ of the rational curve $\nu (C)=C_8$, so that $T\in \sigma _5(C_8)$ and by Theorem \ref{curve} $\sr(T) =5$.

\item  $Z$ is contained in the union of two lines $L$ and $R$. We say that $Z$ is
of type $(i,j)$ if $\deg (Z\cap L)=i $ and $\deg (Z\cap R)=j $ and
for any other couple of lines in the ideal of $Z$ the degree of the
intersections is not smaller.  Four different cases can occur: $Z$
is of type $(3,2)$, in which case $Z\cap L \cap R= \emptyset$, $Z$
is of type $(3,3)$ or $(4,2)$, and in these two cases $Z$, $L$ and
$R$ meet in a point $P$, $Z$ is of type $(4,1)$, in which case $R$
is not unique. We set $C_4=\nu (L)$, $C'_4=\nu (R)$, $O=\nu (P)$,
$\Pi _L=<\nu (Z\cap L)>$ and $\Pi _R=<\nu (Z\cap R)>$.

\begin{itemize}
\item $Z$ is of type $(4,1)$. Hence $\Pi _Z$ is sum of the linear space
$\Pi _L\subseteq \sigma _4(C_4)$ and the point $Q=\Pi _R\in
X_{2,4}$, so that $T=Q'+Q$ for a suitable $Q'\in  \sigma _4(C_4)$,
and since $\sr(Q')\leq 4$ by Theorem \ref{curve}, we get
$\sr(Q')\leq 5$ .

\item $Z$ is of type $(3,2)$. Hence $\Pi _Z$ is sum of the linear spaces
 $\Pi _L\cong \PP 2$ and the line  $\Pi _R$, so that $T=Q'+Q$ for suitable
  $Q\in \Pi _L\subseteq \sigma _3(C_4)$ and $Q'\in \Pi _R\subseteq \sigma _2(C'_4)$.
  Since  $\sr(Q)\leq 3$ and $\sr(Q')\leq 4$, we get  $\sr(Q)\leq 7$.

\item $Z$ is of type $(3,3)$. Hence $\Pi _Z$ is sum of the linear spaces
 $\Pi _L\cong \PP 2$ and $\Pi _R\cong \PP 2$ meeting at one point, so that $T=Q'+Q$
 for suitable $Q\in \Pi _L\subseteq \sigma _3(C_4)$ and $Q'\in \Pi _R\subseteq \sigma _3(C'_4)$.
 Since  $\sr(Q)\leq 3$ and $\sr(Q')\leq 3$, we get  $\sr (T)\leq 6$. Moreover if $Z$ has
 support on $4$ points, we see that $\sr(T)=6$, using the same kind of argument as in Lemma \ref{quattro su conica}.

\item $Z$ is of type $(4,2)$.  In this case $(I_{Z})_{2}$ can be written as $<xy,x^{2}>$,
then working as above we can see that the catalecticant matrix $M_{2,2}(T)$ has rank $4$.
Since at least set theoretically $I(\sigma_{4}(X_{2,4}))$ is generated by the $5\times 5$
minors of $M_{2,2}$, we conclude that such $T$ belongs to $\sigma_{4}(X_{2,4})$.
\end{itemize}

\item $Z$ is contained in a double line. We distinguish the following cases:
\begin{itemize}
\item The support of $Z$ is a point $P$, i.e. the ideal of $Z$ is
either of type  $(x^3,x^2y,y^2)$ or, in affine coordinates,
$(x-y^{2},y^{4})\cap (x^{2},y)$. In the first case $Z$ is contained
in the 3-fat point supported on $P$, so that $\Pi_{Z}$ is contained
in in the second osculating variety and by Lemma \ref{lemmaosc}
$T\in \sigma_{4}(X_{2,4})$.
\\
In the second case it easy to see that the homogeneous ideal
contains $x^{2}$, $xy^{2}$ and $y^{4}$ and this fact forces the
catalecticant matrix $M_{2,2}(T)$ to have rank smaller or equal to
4. Hence $T\in \sigma_{4}(X_{2,4})$.
 \item The support of $Z$ consists of two points, i.e. the ideal of $Z$ is
 of type  $(x^2,y^2)\cap (x-1,y)$ or $(x^2, xy,y^2)\cap (x-1,y^2)$.
 \par In the first case $Z$ is union of a scheme $Y$ of degree 4  and of a point
  $P$, hence $\Pi _Z$ is sum of the linear spaces $\Pi _Y$ and $\Pi _P$, so that
   $T=Q+\nu (P)$ for suitable $Q\in \Pi _Y$. The above description of the case
   corresponding to $I_{Z}$ of the type $(x^2,y^2)$ shows that either $Q\in \sigma_{3}(X_{2,4})$ or $\sr(Q)=6$.
   Now if $Q\in \sigma_{3}(X_{2,4})$ then clearly $T\in \sigma_{4}(X_{2,4})$, if $\sr(Q)=6$ then $\sr(T)=7$.
 \par
  In the second case  $Z$ is union of a jet and of a 2-fat point, hence $\Pi _Z$ is sum of two linear spaces,
  each of them is contained in a tangent space of $X_{2,4}$ at a different point, so that
  $T=Q+Q'$ with $Q$, $Q'$ contained in the tangential variety; then both $Q$ and $Q'$
  belongs to $\sigma_{2}(X_{2,4})$ hence  $T\in \sigma_{4}(X_{2,4})$.
\item The support of $Z$ consists of three points, i.e. the ideal of $Z$ is of type
$(x,y)\cap ((x^2-1),y^2)$.  Let $P_1, P_2,P_3$ be the points supporting $Z$, with
$\eta _1,\eta _2$ jets such that $Z=\eta _1 \cup \eta _2 \cup P_3$.
There exists a smooth conic $C$ containing $\eta _1 \cup \eta _2 $, and $\nu (C)$ is a $C_8$.
 Then  $\Pi _Z$ is the sum of $\nu (P_3)$ and of the linear space $<\nu (\eta _1),\nu (\eta _2)>$,
 so that $T=Q+\nu (P_3)$ for a suitable $Q \in \sigma _4 (C_8)$, with $\sr(Q)\leq 6$, so we get  $\sr (T)\leq 7$.
 \end{itemize}
 \end{itemize}
\end{proof}

\textbf{Acknowledgements}:
The authors would like to thank E. Ballico, J. M. Landsberg, L.
Oeding and G. Ottaviani for several useful talks and the anonymous
referees for their appropriate and accurate comments.

\end{document}